\documentclass[12pt]{article}
\usepackage[margin=25mm]{geometry}
\usepackage{amsmath}
\usepackage{amsfonts}
\usepackage{amssymb}
\usepackage{amsthm}
\usepackage{graphicx}
\pagenumbering{gobble}
\usepackage{verbatim}
\immediate\write18{texcount -tex -sum  \jobname.tex > \jobname.wordcount.tex}

\newtheorem{theorem}{Theorem}
\newtheorem{lemma}{Lemma}
\newtheorem{conjecture}{Conjecture}
% Keywords command
\providecommand{\keywords}[1]
{
  \small	
  \textbf{\textit{Keywords---}} #1
}

\title{Independent sets versus 4-dominating sets \\ in outerplanar graphs}
\author{Dmitrii Taletskii$^{1,2,3}$\\
\small
$^1$ National Research University Higher School of Economics, \\
\small
Bolshaya Pechyorskaya ul. 25/12, Nizhny Novgorod, 603155 Russia\\%
\small
    $^2$Lobachevsky Nizhny Novgorod State University, \\ 
    \small
    ul. Gagarina 23, Nizhny Novgorod, 603950 Russia\\
    \small
    $^3$Saint Petersburg University, 
    7/9 Universitetskaya nab., \\ \small 
    St. Petersburg, 199034 Russia\\%
    \small
    e-mail: dmitalmail@gmail.com
}
\date{} % Comment this line to show today's date

\begin{document}
\maketitle

\begin{abstract}
We show that the number of independent sets in every outerplanar graph is greater than the number of its 4-dominating sets.
\end{abstract} \hspace{10pt}

\keywords{independent set, dominating set, $k$-dominating set, outerplanar graph}

\section{Introduction}
Throughout this paper we consider only simple and finite graphs. Let $G$ be a graph with a vertex set $V(G)$ and an edge set $E(G)$. An \textit{independent set} is a pairwise nonadjacent subset of $V(G)$. For $k \geq 1$, a \textit{$k$-dominating set} $D_k$ is a subset of $V(G)$ such that every vertex not from $D_k$  is adjacent to at least $k$ vertices from $D_k$. We use the abbriveations `IS' and `$k$-DS' for the terms `independent set' and `$k$-dominating set' respectively. Let $i(G)$ (resp. $\partial_k(G)$) be the number of all IS (resp. $k$-DS) of a graph $G$.  Denote by $\mathfrak{D}_k(G)$ the family of all $k$-DS of a graph $G$.

 A planar graph is called \textit{outerplanar} if it has an embedding in the plane such that all vertices belong to the boundary of its outer face. An outerplanar graph $G$ is called \textit{maximal outerplanar} (or \textit{MOP}) if $G + uv$ is not outerplanar for any two non-adjacent vertices $u,v \in V(G)$. It is well-known that every inner face of a MOP is a triangle and every MOP with at least 3 vertices has at least 2 vertices of degree 2. Denote by $\mathcal{OP}$ and $\mathcal{MOP}$ the classes of all outerplanar and maximal outerplanar graphs, respectively.

The definition of $k$-dominating set implies that for every graph $G$, any two non-adjacent vertices $u,v \in V(G)$ and any integer $k \geq 1$ we have $\partial_k(G) \leq \partial_k(G + uv)$. Therefore, a complete graph $K_n$ has the maximum number of $k$-independent sets among all $n$-vertex graphs. Trees with extremal numbers of $k$-dominating sets were described in~\cite{BS06} for $k = 1$ and in~\cite{T22} for $k \geq 2$. In~\cite{N15} and~\cite{GKMP17} for every $k \geq 2$ new upper bounds for the number of $k$-dominating independent sets in $n$-vertex graphs were presented. 

The empty graph $nK_1$ has the maximum possible number of independent sets among all $n$-vertex graphs. Moon and Moser~\cite{MM65} described $n$-vertex graphs with maximal possible number of \textit{maximal independent} (i.e. independent 1-dominating) sets. In \cite{A98} for all $n \geq 5$ the graphs $H_n \in \mathcal{MOP}$ and $H'_n \in \mathcal{MOP}$ with the maximum and minimum  possible number of IS among all $n$-vertex MOPs were described.

\begin{figure}[h!]

\begin{center}
\includegraphics[scale=0.35
]{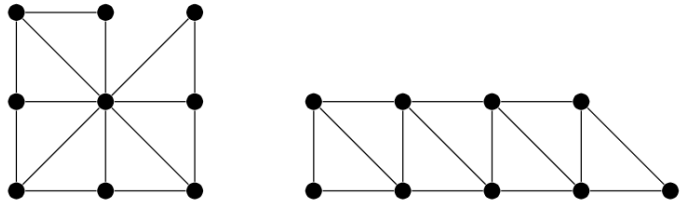}
\caption{Graphs $H'_9$ and $H_9$}
\end{center}
\end{figure}

The main result of this paper is the following fact:

\begin{theorem}\label{thm1}
For every outerplanar graph $G$ we have $i(G) > \partial_4(G)$.
\end{theorem}

%Note that this bound is sharp, since for all sufficiently large $n$ we have $i(H_n) < \partial_3(H_n)$ (this is easy to prove using the fact that $i(H_n) = i(H_{n-1}) + i(H_{n-3})$ and $\partial_3(H_n) > \partial_3(H_{n-1}) + \partial_3(H_{n-3})$ for all $n \geq 6$).

The rest of the paper is organized as follows. In Section 2 we introduce some graph terminology and present a maximal outerplanar graph partition. In Section 3 we prove Theorem 1. In Section 4 we consider a possible generalization of~Theorem~\ref{thm1} and obtain a similar result for the class of trees.

\section{Preliminaries}

\subsection{Basic terminology}

A \textit{tree} is a connected acyclic graph and a \textit{leaf} is a vertex of degree one in a tree. A \textit{support vertex} in a tree is a vertex which is adjacent to at least one leaf. A \textit{diameter} of a connected graph is the maximum possible distance between its vertices. A simple path is called \textit{diametral}, if its length is equal to the diameter of a graph. Clearly, the ends of any diametral path in a tree are leaves. 

For a graph $G \in \mathcal{OP}$ we consider \textit{a weak dual graph} $T(G)$, such that the vertices of $T(G)$ correspond to the inner faces of $G$ and two vertices are adjacent if and only if the corresponding faces have a common edge. It is well-known that if $G \in \mathcal{MOP}$, then $T(G)$ is a subcubic tree.

Let $G$ be a graph and $U \subset V(G)$ be its vertex subset. Let $G \setminus U$ be an induced subgraph of $G$ with the set of vertices $V(G) \setminus U$.

An inner face of a MOP is called an \textit{end face} if it has a vertex of degree 2. Note that a face $f$ of a MOP $G$ is an end face if and only if the corresponding vertex in $T(G)$ has degree at most one. We say that a graph $G$ \textit{contains} a face $f$, if all vertices and edges from $f$ belong to $G$.

Suppose that $f$ and $f'$ are two adjacent inner faces of $G$ with a common edge $uv$. Denote by $G[f;f']$ the maximal by inclusion subgraph of $G$ such that it contains $f$, does not contain $f'$ and $uv$ is its outer edge. It is easy to see that if $G \in \mathcal{MOP}$, then $G[f;f'] \in \mathcal{MOP}$.

\subsection{Independent and 4-dominating sets}

Let $G$ be a graph and $u,v \in V(G)$. Let $i(G,u^+)$ (resp. $i(G,u^-)$) be the number of IS of $G$ which contain (resp. do not contain) $u$. We denote by $i(G,u^+,v^+)$ the number of IS of $G$ such that $u,v \in I$ and denote the values $i(G,u^+,v^-)$, $i(G,u^-,v^+)$ and $i(G,u^-,v^-)$ in the similar way. Clearly, for any vertices $u,v \in V(G)$ we have
$$i(G) = i(G,u^+,v^+) + i(G,u^+,v^-) + i(G,u^-,v^+) + i(G,u^-,v^-).$$
Moreover, $i(G,u^+,v^+) = 0$, if and only if $uv \in E(G)$.

Let $\partial_4(G,u^+)$ (resp. $\partial_4(G,u^-)$) be the number of 4-DS in $G$ which contain (resp. do not contain) $u$. We denote the values $\partial_4(G,u^+,v^+)$, $\partial_4(G,u^+,v^-)$, $\partial_4(G,u^-,v^+)$ and $\partial_4(G,u^-,v^-)$ in the similar way. Again, for any vertices $u,v \in V(G)$ we have
$$\partial_4(G) = \partial_4(G,u^+,v^+) + \partial_4(G,u^+,v^-) + \partial_4(G,u^-,v^+) + \partial_4(G,u^-,v^-).$$

Note that $\partial_4(G,u^+,v^+) > 0$ for any graph $G$. It is easy to see that $\partial_4(G,u^-,v^+) > 0$, if and only if $deg(u) \geq 4$. Moreover, $\partial_4(G,u^-,v^-) > 0$, if and only if either $\min(deg(u),deg(v)) \geq 5$ or $uv \notin E(G)$ and $\min(deg(u),deg(v)) \geq 4$.

\subsection{MOP-partition}

Consider a graph $G \in \mathcal{MOP}$ and its edge $uv \in E(G)$. Let $G_L,G_R \in \mathcal{MOP}$ be two maximal by inclusion subgraphs of $G$ such that $ E(G_L) \cap E(G_R) = uv$ and $uv$ is an outer edge in both $G_L$ and $G_R$. Clearly, if $uv$ is not an outer edge in $G$ than both $G_L$ and $G_R$ have at least 3 vertices (otherwise one of them has 2 vertices and the other coincides with $G$). Call a triple $(G_L,G_R,uv)$ a \textit{MOP-partition} of $G$. For the given MOP-partition $(G_L,G_R,uv)$ we shall use the notation
$$\mathcal{I}_{00} = i(G_R,u^-,v^-), \ \mathcal{I}_{01} = i(G_R,u^-,v^+), \ \mathcal{I}_{10} = i(G_R,u^+,v^-).$$
Since $i(G_R,u^+,v^+) = 0$, we have $i(G_R) = \mathcal{I}_{00} + \mathcal{I}_{01} + \mathcal{I}_{10}$. Morever, if $G_R$ has at least 3 vertices, than $\mathcal{I}_{00} > \textrm{max}(\mathcal{I}_{01},\mathcal{I}_{10})$.

We now introduce a similar concept for 4-DS. For the given MOP-partition $(G_L,G_R,uv)$ of a graph $G$ let $\mathcal{D}_{11} = \partial_4(G_R,u^+,v^+)$. Denote by $\mathcal{D}^k_{01}$ the number of 4-DS $D'$ of a graph $G_R \setminus u$ such that $v \in D'$ and the vertex $u$ has at least $\max(0,4-k)$ neighbors from $D' \setminus v$ in $G$. In other words, $\mathcal{D}^k_{01}$ is the number of `almost 4-dominating' sets $D'$ of $G_R$ with respect to the vertex $u$. Denote $\mathcal{D}^k_{10}$ in the same way as $\mathcal{D}^k_{01}$. Finally, let $\mathcal{D}^{kl}_{00}$ be the number of 4-DS $D''$ of a graph $G_R \setminus \{u,v\}$, such that in the graph $G_R$ the vertices $u$ and $v$ have at least $\max(0,4-k)$ and $\max(0,4-l)$ neighbors from $D''$, respectively.

From the definitions of $\mathcal{D}^k_{01}$, $\mathcal{D}^k_{10}$ and $\mathcal{D}^{kl}_{00}$ it follows immediately that for all $1 \leq k' \leq k$ and $1 \leq l' \leq l$ we have $\mathcal{D}^{k'}_{01} \leq \mathcal{D}^k_{01}, \mathcal{D}^{l'}_{10} \leq \mathcal{D}^l_{10}$ and $\mathcal{D}^{k'l'}_{00} \leq \mathcal{D}^{kl}_{00}$. We now prove a few more similar properties.

\begin{lemma}\label{lem1}
Let $G \in \mathcal{MOP}$. For the given MOP-partition $(G_L,G_R,uv)$ the following holds:

1. For all $k,l \geq 0$ we have $\mathcal{D}_{11} \geq \max(\mathcal{D}^k_{10},\mathcal{D}^k_{01})$ and $ \min(\mathcal{D}^k_{10},\mathcal{D}^k_{01}) \geq \mathcal{D}^{kl}_{00}.$

2. If $deg_{G_R}(u) \geq 2$, then $2 \cdot \mathcal{D}^3_{10} \geq \mathcal{D}^4_{10}$ and $2 \cdot \mathcal{D}^3_{01} \geq \mathcal{D}^4_{01}$.

3. If $deg_{G_R}(u) = 3$, then $3 \cdot \mathcal{D}^2_{01} \geq \mathcal{D}^3_{01}$. Moreover, if $deg_{G_R}(u) \geq 4$, then $2 \cdot \mathcal{D}^2_{01} \geq \mathcal{D}^3_{01}$. 
\end{lemma}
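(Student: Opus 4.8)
The plan is to prove each of the three claims by carefully unwinding the combinatorial definitions of the quantities $\mathcal{D}^k_{01},\mathcal{D}^k_{10},\mathcal{D}^{kl}_{00},\mathcal{D}_{11}$ in terms of $4$-dominating and ``almost $4$-dominating'' sets of $G_R$, and then exhibiting explicit injections (or, for the ``multiplicative'' inequalities, controlled many-to-one maps) between the corresponding families of sets. Throughout I would write $G_R'=G_R\setminus u$ and $G_R''=G_R\setminus\{u,v\}$, and I would fix the relevant neighborhoods: let $N_u$ (resp. $N_v$) denote the set of neighbors of $u$ (resp. $v$) in $G_R$, noting that $v\in N_u$ and $u\in N_v$ since $uv\in E(G_R)$.

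For Claim 1, the key observation is that enlarging the ``deficiency budget'' only enlarges a counting family, combined with the fact that a genuine $4$-DS of $G_R$ restricts to an almost-$4$-DS of $G_R\setminus u$, etc. Concretely, to show $\mathcal{D}_{11}\geq \mathcal{D}^k_{01}$ I would take a set $D'$ counted by $\mathcal{D}^k_{01}$ (so $D'\subseteq V(G_R')$, $v\in D'$, and $u$ has at least $\max(0,4-k)$ neighbors in $D'\setminus v$), and map it to $D'\cup\{u\}$; this is a $4$-DS of $G_R$ containing both $u$ and $v$ (adding $u$ to the set only helps dominate the remaining vertices, and $u,v$ are in the set so they need not be dominated), and the map is injective. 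For $\min(\mathcal{D}^k_{01},\mathcal{D}^k_{10})\geq \mathcal{D}^{kl}_{00}$, given $D''$ counted by $\mathcal{D}^{kl}_{00}$ I would map it to $D''\cup\{v\}$ and check it is counted by $\mathcal{D}^k_{01}$: it is a $4$-DS of $G_R\setminus u$ (every vertex of $G_R''$ already had $4$ neighbors in $D''$, and $v$ only helps), it contains $v$, and the deficiency condition on $u$ transfers from the $\mathcal{D}^{kl}_{00}$ condition (here one uses $\max(0,4-k)$ on both sides), again injectively; symmetrically for $\mathcal{D}^k_{10}$.

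For Claims 2 and 3 the idea is a \emph{covering} argument: a set counted by $\mathcal{D}^4_{10}$ (resp. $\mathcal{D}^3_{01}$) is an honest $4$-DS of $G_R\setminus v$ (resp. $G_R\setminus u$) with \emph{no} constraint coming from $u$ (resp. a weak one), whereas a set counted by $\mathcal{D}^3_{10}$ (resp. $\mathcal{D}^2_{01}$) must additionally contain at least one neighbor of $v$ (resp. $u$) from its own vertex set. So I would show that every set $D$ counted by $\mathcal{D}^4_{10}$ becomes a set counted by $\mathcal{D}^3_{10}$ after adding one suitable vertex, and then bound how many sets of the former type can map to the same set of the latter type. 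For $2\cdot\mathcal{D}^3_{10}\geq\mathcal{D}^4_{10}$ with $\deg_{G_R}(u)\geq 2$: pick a fixed neighbor $w$ of $u$ in $G_R\setminus v$ (it exists since $\deg_{G_R}(u)\geq 2$ means $u$ has a neighbor other than $v$, and that neighbor lies in $G_R\setminus v$). Partition the $\mathcal{D}^4_{10}$-sets $D$ into those already containing a neighbor of $u$ (these are $\mathcal{D}^3_{10}$-sets already) and those not; for the latter, $D\cup\{w\}$ is a $\mathcal{D}^3_{10}$-set and the map $D\mapsto D\cup\{w\}$ is injective on that part (we can recover $D$ as $(D\cup\{w\})\setminus\{w\}$ since $w\notin D$). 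Hence $\mathcal{D}^4_{10}\leq \mathcal{D}^3_{10}+\mathcal{D}^3_{10}$. The same template with $v$ in place of $u$, or with the relevant degree, gives the $\mathcal{D}^3_{01}$ statement. For Claim 3 one needs the sharper count: when $\deg_{G_R}(u)=3$, a $\mathcal{D}^3_{01}$-set needs one neighbor of $u$ out of the (at most) $2$ available in $G_R\setminus u$ other than... — more precisely, $u$ has neighbors $v$ and exactly two others, say $w_1,w_2$, in $G_R$, both lying in $G_R\setminus u$; a $\mathcal{D}^3_{01}$-set must contain at least one of $w_1,w_2$, a $\mathcal{D}^2_{01}$-set must contain at least two, i.e. both. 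Classifying $\mathcal{D}^3_{01}$-sets by which of $w_1,w_2$ they contain and toggling the missing one yields $\mathcal{D}^3_{01}\leq \mathcal{D}^2_{01}+\mathcal{D}^2_{01}+\mathcal{D}^2_{01}=3\mathcal{D}^2_{01}$ (the three classes: contains $w_1$ only, contains $w_2$ only, contains both; each injects into the ``contains both'' class by adding the missing vertex, except the last which is the identity). When $\deg_{G_R}(u)\geq 4$, $u$ has at least three neighbors $w_1,w_2,w_3,\dots$ besides $v$ in $G_R\setminus u$; a $\mathcal{D}^3_{01}$-set contains at least one, a $\mathcal{D}^2_{01}$-set at least two, and a more careful pigeonhole (a set with exactly one $w_i$ can have that $w_i$ supplemented by either of the two others, giving a $2$-to-many map that still yields a factor $2$) gives $2\cdot\mathcal{D}^2_{01}\geq \mathcal{D}^3_{01}$.

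The main obstacle I anticipate is Claim 3, specifically getting the \emph{constants} $3$ and $2$ exactly right rather than something weaker: the naive toggling argument must be organized so that each ``deficient'' $\mathcal{D}^3_{01}$-set is charged to a $\mathcal{D}^2_{01}$-set at most the allowed number of times, and one must be careful that adding a neighbor $w_i$ to a set does not destroy the property of being a $4$-DS of $G_R\setminus u$ — it cannot, since adding vertices to a dominating set preserves domination, and $w_i\neq u$ so $w_i$ is a legitimate vertex of $G_R\setminus u$; the only real content is the bookkeeping of how many preimages each target set has, which is where the distinction between $\deg_{G_R}(u)=3$ and $\deg_{G_R}(u)\geq 4$ enters. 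I would handle this by a clean case split on $|D\cap(N_u\setminus\{v,u\})|$ and an explicit description of the fibers of the toggling map.
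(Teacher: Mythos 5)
Your proposal matches the paper's proof essentially step for step: the same add-a-vertex injections for part 1, and for parts 2 and 3 the same partition of the counted sets by the size of their intersection with the relevant neighborhood, including the two-neighbor versus at-least-three-neighbor dichotomy that yields the constants $3$ and $2$. The only points to tidy up in writing it out are the bookkeeping of which vertex the superscript in $\mathcal{D}^k_{10}$ constrains (for that family the fixed vertex to be added must be a neighbor of $v$, which exists because $\deg_{G_R}(u)\geq 2$ forces $|V(G_R)|\geq 3$ and hence $\deg_{G_R}(v)\geq 2$, as the paper notes explicitly) and the explicit fiber count in the $\deg_{G_R}(u)\geq 4$ case, both of which your sketch already anticipates and which close without difficulty.
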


\begin{proof}
\textbf{Statement 1.} We show that for all $k \geq 0$ the inequality $\mathcal{D}_{11} \geq \mathcal{D}^k_{10}$ holds. Consider the function $$F: \mathfrak{D}_4(G_R \setminus v) \longrightarrow \mathfrak{D}_4(G_R),$$ which maps a 4-DS $D$ of $G_R \setminus v$ into a 4-DS $D \cup \{v\}$ of $G_R$. Clearly, $F$ is injective, therefore $\partial_4(G_R \setminus v) \leq \partial_4(G)$, this implies the inequality. The inequalities $\mathcal{D}_{11} \geq \mathcal{D}^k_{01}$ and $\min(\mathcal{D}^k_{10},\mathcal{D}^k_{01}) \geq \mathcal{D}^{kl}_{00}$ are easy to prove using the same approach.

\textbf{Statement 2.} The inequality $deg_{G_R}(u) \geq 2$ means that the subgraph $G_R$ has at least 3 vertices, thus $deg_{G_R}(v) \geq 2$. We denote by $w$ the common neighbor of $u$ and $v$, such that $w \in V(G_R)$ (since $uv$ is an outer edge of $G_R \in \mathcal{MOP}$, there is exactly one such vertex).  Our goal is to show that $ \mathcal{D}^4_{10} - \mathcal{D}^3_{10} \leq \mathcal{D}^3_{10}$. The left hand side equals to the number of 4-DS of the graph $G_R \setminus v$ such that they don't have vertices from $N_{G_R}[u] \setminus v$. The right hand side equals the number of 4-DS of the same graph which contain at least 1 vertex from the set $N_{G_R}[u] \setminus v$, therefore the inequality holds. It is easy to prove that $2\mathcal{D}^3_{10} \geq \mathcal{D}^4_{10}$, using the same approach.

\textbf{Statement 3.} By the definition, $\mathcal{D}^2_{01}$ is the number of 4-DS of $G_R \setminus u$ with at least two vertices from the set $N_{G_R}[u] \setminus v$. Therefore, the difference $\mathcal{D}^3_{01} - \mathcal{D}^2_{01}$ equals to the number of 4-DS of the graph $G_R \setminus u$, with exactly one vertex from the set $N_{G_R}[u] \setminus v$. Let $N_{G_{R}}[u] \setminus v = \{w_1,\ldots w_s\}$, where $s \geq 2$. If $s = 2$, then the number of 4-DS with both vertices $w_1$ and $w_2$ is at least half of the number of 4-DS with one fixed vertex, this yields the inequality $\mathcal{D}^3_{01} - \mathcal{D}^2_{01} \geq 2 \cdot \mathcal{D}^2_{01}$. If $s \geq 3$, then it is easy to see that the number of 4-DS with exactly one vertex from the set $N_{G_{R}}[u] \setminus v$ is less than then the number of 4-DS with at least two vertices, therefore  $\mathcal{D}^3_{01} - \mathcal{D}^2_{01} \geq \mathcal{D}^2_{01}$, as required.
\end{proof}

\section{Proof of Theorem 1}

We call a graph $G \in \mathcal{MOP}$ \textit{critical}, if $i(G) \leq \partial_4(G)$ and for every outerplanar graph $G'$ such that $|V(G')| < |V(G)|$ we have $i(G') > \partial_4(G')$. In this section we show that there are no critical graphs, therefore Theorem~\ref{thm1} holds. It suffices to consider only maximal outerplanar graphs, since for every graph $G_0 \in \mathcal{OP}$ there exists a graph $G \in \mathcal{MOP}$ such that $G_0$ is a spanning subgraph of $G$ and the inequalities $i(G_0) > i(G)$ and $\partial_4(G_0) \leq \partial_4(G)$ hold. 

Therefore, we consider a graph $G \in \mathcal{MOP}$ and its weak dual graph $T(G)$ (remind that $T(G)$ is a subcubic tree). Let $x_1x_2\ldots x_k$ be some diametral path in $T(G)$. If $k \leq 3$, then there are only 3 possible MOPs up to isomorphism and it is easy to check that they are not critical. Thus we assume that $k \geq 4$.

\begin{lemma}\label{lem2}
If a graph $G \in \mathcal{MOP}$ has an edge $uv$ such that $deg(u) = 2$ and $deg(v) = 3$, then $G$ is not critical. 
\end{lemma}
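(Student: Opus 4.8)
My plan is to argue by contradiction: assume $G$ is critical, so that every outerplanar graph with fewer vertices than $G$ has strictly more independent sets than $4$-dominating sets, and derive $i(G) > \partial_4(G)$. First I would record the local structure around $uv$. Since $deg(u) = 2$, the vertex $u$ lies in a unique inner face, a triangle $uvw$, so $N(u) = \{v,w\}$ and $vw \in E(G)$; since $deg(v) = 3$, the vertex $v$ lies in exactly two inner faces, one being $uvw$ and the other sharing the edge $vw$ with it (it cannot share $uv$ or $uw$, as $u$ lies in only one face), hence a triangle $vwz$ with $wz \in E(G)$ and $N(v) = \{u,w,z\}$. In particular the edge $vw$ separates these two triangles, so it is a chord of $G$. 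The one observation I will lean on repeatedly is that every vertex of degree at most $3$ belongs to every $4$-dominating set; in particular $u$ and $v$ belong to every $4$-DS of $G$.

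Next I would apply the MOP-partition $(G_L, G_R, wv)$ with $G_L$ the triangle on $\{u,v,w\}$ and $G_R = G \setminus u$ (valid since $vw$ is a chord), and use the quantities $\mathcal{I}_{00}, \mathcal{I}_{01}, \mathcal{I}_{10}, \mathcal{D}_{11}, \mathcal{D}^k_{01}$ of Section~2.3 for it; here $|V(G_R)| < |V(G)|$, so criticality yields $i(G_R) > \partial_4(G_R)$. Counting independent sets of $G$ by their trace on $\{v,w\}$ and using that $G_L$ is a triangle, one finds
\[
i(G) = 2\mathcal{I}_{00} + \mathcal{I}_{01} + \mathcal{I}_{10} = i(G_R) + \mathcal{I}_{00}.
\]
For $4$-dominating sets, deleting the forced vertex $u$ identifies the $4$-DS of $G$ with the sets $D_R \subseteq V(G_R)$ containing $v$ that $4$-dominate $G_R$, with the single exception that $w$, when it lies outside $D_R$, needs only three neighbours in $D_R$ (because $u$, adjacent to $w$, supplies a fourth). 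Separating the cases $w \in D_R$ and $w \notin D_R$ and matching the definitions, one obtains $\partial_4(G) = \mathcal{D}_{11} + \mathcal{D}^2_{01}$, while the same bookkeeping performed inside $G_R$ (where $w$, when outside the $4$-DS, needs all four of its neighbours) gives $\partial_4(G_R) = \mathcal{D}_{11} + \mathcal{D}^1_{01}$. Subtracting,
\[
i(G) - \partial_4(G) = \bigl(i(G_R) - \partial_4(G_R)\bigr) + \mathcal{I}_{00} - \bigl(\mathcal{D}^2_{01} - \mathcal{D}^1_{01}\bigr) \ \ge\ 1 + \mathcal{I}_{00} - \bigl(\mathcal{D}^2_{01} - \mathcal{D}^1_{01}\bigr),
\]
so the whole statement reduces to the inequality $\mathcal{D}^2_{01} - \mathcal{D}^1_{01} \le \mathcal{I}_{00}$.

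I expect this last inequality to be the main obstacle. By the monotonicity $\mathcal{D}^1_{01} \le \mathcal{D}^2_{01}$ and the definitions, its left-hand side counts exactly the $4$-dominating sets $D$ of $G_R \setminus w$ with $v \in D$ for which $w$ has precisely two neighbours in $D \setminus \{v\}$ — the ``defective'' sets that the extra domination from $u$ upgrades to $4$-DS of $G$ — while its right-hand side equals $i\bigl(G \setminus \{u,v,w\}\bigr)$. If $deg(w) = 3$ then $N(w) = \{u,v,z\}$ has at most one vertex outside $\{u,v,w\}$, so the left-hand side is $0$ and the inequality is trivial; the genuine case is $deg(w) \ge 4$, and there I would construct an explicit injection from the defective sets into the independent sets of $H := G \setminus \{u,v,w\}$. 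Writing a defective set as $\{v\} \cup E$ with $E \subseteq V(H)$, the set $E$ $4$-dominates $H$ up to a one-unit discount at $z$ and meets $N(w)$ in exactly two vertices; I would build the injection in the inductive spirit of the rest of Section~3 — peeling a vertex of degree at most two off the outerplanar graph $H$, or, equivalently, comparing with $\partial_4$ of $H$ with a pendant vertex appended at $z$ and invoking criticality once more — while keeping the ``exactly two neighbours of $w$'' condition controlled with estimates of the type collected in Lemma~\ref{lem1}. Once $\mathcal{D}^2_{01} - \mathcal{D}^1_{01} \le \mathcal{I}_{00}$ is in hand, the displayed inequality gives $i(G) > \partial_4(G)$, contradicting the criticality of $G$; hence $G$ is not critical.
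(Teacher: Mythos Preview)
Your reduction is correct and, once unwound, is exactly the paper's: with $w=a$ and $z=b$, your identities $i(G)=i(G_R)+\mathcal{I}_{00}$ and $\partial_4(G)-\partial_4(G_R)=\mathcal{D}^2_{01}-\mathcal{D}^1_{01}$ are just a rewriting of the paper's $i(G)=i(G_1)+i(G_3)$ and of the quantity $\partial_4(G)-\partial_4(G_1)$, with $G_1=G_R$ and $G_3=H=G\setminus\{u,v,w\}$. So the whole lemma indeed reduces to $\mathcal{D}^2_{01}-\mathcal{D}^1_{01}\le\mathcal{I}_{00}=i(H)$.

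The gap is in how you propose to prove that last inequality. Neither of your two suggestions works as stated. An injection from the defective sets straight into the independent sets of $H$ is the wrong target: defective sets are large ($4$-dominating-type) objects, and there is no natural way to land them in independent sets. Your alternative, appending a pendant $p$ at $z$, correctly identifies the defective sets (after removing $v$) with certain $4$-DS of $H^+$, but criticality then only yields $\mathcal{D}^2_{01}-\mathcal{D}^1_{01}\le\partial_4(H^+)<i(H^+)=i(H)+i(H,z^-)$, which overshoots $i(H)$; the ``exactly two neighbours of $w$'' constraint and Lemma~\ref{lem1} do not obviously recover the missing $i(H,z^-)$.

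The paper's move is one step simpler and uses precisely the constraint you recorded. In your notation: for a defective set write $E=D'\setminus\{v\}\subseteq V(H)$, and send it to $E\cup\{z\}$. Since $N_G(v)=\{u,w,z\}$, the only vertex of $H$ that lost a dominator when $v$ was dropped is $z$ itself, so $E\cup\{z\}\in\mathfrak{D}_4(H)$ with no discount. Injectivity comes for free from your own observation that $z\in N(w)\setminus\{v\}$: two defective $E_1\ne E_2$ cannot differ only at $z$, because then $|E_1\cap(N(w)\setminus\{v\})|\ne|E_2\cap(N(w)\setminus\{v\})|$, contradicting that both equal $2$. Hence $\mathcal{D}^2_{01}-\mathcal{D}^1_{01}\le\partial_4(H)$, and one more application of criticality on the smaller outerplanar graph $H$ gives $\partial_4(H)<i(H)=\mathcal{I}_{00}$, finishing the proof. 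So the fix is not to add a pendant but simply to add $z$ to the set; everything else in your write-up is fine.
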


\begin{proof}
Since $G \in \mathcal{MOP}$, the vertices $u$ and $v$ have the unique common neighbor $a$ and the vertices $v$ and $a$ have the unique common neighbor~$b$, other then $u$. Let $G_1 = G \setminus u$ and $G_3 = G \setminus \{u,a,v\}$. Then $$i(G) = i(G,u^-) + i(G,u^+) = i(G_1) + i(G_3).$$ 
We now show that
$$\partial_4(G) - \partial_4(G_1) \leq \partial_4(G_3).$$ 

The difference $\partial_4(G) - \partial_4(G_1)$ equals to the number of 4-DS $D$ of the graph $G$ such that $D \setminus u$ is not a 4-DS for the graph $G_1$. Since $v$ belongs to every 4-DS in both $G$ and $G_1$, this is possible if and only if $a \notin D$ and exactly two vertices from the set $N(a) \setminus \{u,v\}$ belong to $D$. Let $\mathfrak{D}'_4(G_1)$ be the family of 4-DS $D'$ of $G$ such that $D' \setminus u$ is a 4-DS of $G_1$. Consider the function $$F: (\mathfrak{D}_4(G) \setminus \mathfrak{D}'_4(G_1)) \longrightarrow \mathfrak{D}_4(G_3),$$
such that $F(D) = (D \cup \{b\}) \setminus \{u,v\}$. It is easy to see that $F$ is injective, because if $D',D'' \in \mathfrak{D}_4(G) \setminus \mathfrak{D}'_4(G_1)$ are two distinct 4-DS of $\mathfrak{D}_4(G)$, then the sets $D' \setminus \{u,v,b\}$ and $D'' \setminus \{u,v,b\}$ are also distinct. Therefore, $$\partial_4(G) \leq \partial_4(G_1) + \partial_4(G_3) < i(G_1) + i(G_3) = i(G)$$
and $G$ is not critical.
\end{proof}

Lemma~\ref{lem2} implies that every support vertex of $T(G)$ has degree 3. In partucular, $deg(x_2) = 3$ and $f_2$ is adjacent to some end faces $f_1$ and $f'_1$. In the rest of the chapter we denote the faces $f_1$, $f'_1$, $f_2$ and $f_3$ by $a_1a_2b_1$, $a_2a_3b_2$, $b_1b_2a_2$ and $b_1b_2c_1$ respectively. 

\begin{lemma}\label{lem3}
If $deg(x_2) = deg(x_3) = deg(x'_2) = 3$, then $G$ is not critical.
\end{lemma}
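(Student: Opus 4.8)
The plan is to derive a contradiction from criticality by a local reduction in the spirit of Lemma~\ref{lem2}: produce strictly smaller outerplanar graphs, write $i(G)$ exactly as a positive combination of their numbers of independent sets, bound $\partial_4(G)$ from above by the same combination of their numbers of 4-DS, and then invoke the minimality hypothesis.

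Under the assumption $deg(x_2)=deg(x_3)=deg(x'_2)=3$ the end of the diametral path carries a \emph{fixed} gadget. Besides the already named faces $f_1=a_1a_2b_1$, $f'_1=a_2a_3b_2$, $f_2=b_1b_2a_2$, $f_3=b_1b_2c_1$, let $f'_2$ be the face of $G$ corresponding to $x'_2$; it is attached to $f_3$ along one of the edges $b_1c_1,b_2c_1$, say $b_2c_1$, and since $deg(x'_2)=3$ it is flanked by its two end faces, while $f_4$ is attached to $f_3$ along the remaining edge $b_1c_1$. Thus $a_1,a_3$ and the two tips of the $x'_2$-branch have degree $2$ (hence lie in every 4-DS), and $a_2$ has degree $4$ with $N(a_2)=\{a_1,b_1,b_2,a_3\}$. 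Take the MOP-partition $(G_L,G_R,b_1c_1)$ in which $G_R$ is exactly this fixed nine-vertex gadget and $G_L=G[f_4;f_3]$ is strictly smaller. As in Section~2, $i(G)=\mathcal I_{00}\,i(G_L,b_1^-,c_1^-)+\mathcal I_{01}\,i(G_L,b_1^-,c_1^+)+\mathcal I_{10}\,i(G_L,b_1^+,c_1^-)$, where the constants $\mathcal I_{00}>\max(\mathcal I_{01},\mathcal I_{10})$ are evaluated once and for all from $G_R$.

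For the matching upper bound on $\partial_4(G)$ I would use the same cut. Because $b_1c_1$ is a separating edge, the only domination requirements coupling $G_L$ and $G_R$ are those on $b_1$ and on $c_1$: every 4-DS $D$ of $G$ restricts on $G_R$ to an ``almost 4-DS'' with respect to $b_1$ and $c_1$ and on $G_L$ to an analogous set, so summing over the statuses of $b_1,c_1$ and over how many dominators each of them draws from $G_R$ expresses $\partial_4(G)$ as a combination of the fixed gadget parameters $\mathcal D_{11},\mathcal D^k_{01},\mathcal D^k_{10},\mathcal D^{kl}_{00}$ of $G_R$ with matching $\partial_4$-factors on $G_L$. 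Plugging in the monotonicity of the $\mathcal D$-parameters and the inequalities of Lemma~\ref{lem1}, together with the numeric gap $\mathcal I_{00}>\max(\mathcal I_{01},\mathcal I_{10})$, I would bound $\partial_4(G)$ term by term by $\mathcal I_{00}\,\partial_4(G_L,b_1^-,c_1^-)+\mathcal I_{01}\,\partial_4(G_L,b_1^-,c_1^+)+\mathcal I_{10}\,\partial_4(G_L,b_1^+,c_1^-)$. Since $G_L$ and the auxiliary graphs $G_L\setminus b_1$, $G_L\setminus c_1$ used to absorb the residual demands on the cut vertices are all strictly smaller outerplanar graphs, minimality then yields $i(G)>\partial_4(G)$, contradicting criticality.

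I expect the term-by-term comparison in the last step to be the main obstacle: one must check that the ``credits'' the fixed gadget $G_R$ can supply to $b_1$ and to $c_1$, once weighted against the corresponding $G_L$-counts, never exceed what the independent-set side produces, and that in the cases where $b_1$ or $c_1$ is 4-dominated partly from $G_L$ the resulting set still belongs to $\mathfrak D_4$ of a genuine smaller outerplanar graph. Evaluating the $\mathcal I$- and $\mathcal D$-constants of the nine-vertex gadget and carrying out the final arithmetic are routine; the delicacy, exactly as in Lemma~\ref{lem2}, is keeping the independent-set and 4-DS decompositions aligned index by index across the cut $b_1c_1$.
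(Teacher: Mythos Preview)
Your cut is in the right place, but you have reversed the roles of the two pieces, and that reversal is where the argument breaks. With the nine-vertex gadget on the $G_R$ side, the gadget constants $\mathcal I_{00},\mathcal I_{01},\mathcal I_{10},\mathcal D_{11},\ldots$ are fixed numbers, but the $G_L$-side factors appearing in the decomposition of $\partial_4(G)$ are \emph{not} the quantities $\partial_4(G_L,b_1^\pm,c_1^\pm)$: they are the larger ``almost 4-DS'' counts of $G_L$ in which $b_1$ and $c_1$ need only $4-k$ and $4-l$ dominators inside $G_L$, and there is in addition a nonzero $(b_1^+,c_1^+)$ contribution $\mathcal D_{11}\cdot\partial_4(G_L,b_1^+,c_1^+)$ that has no counterpart on the independent-set side at all. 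Replacing those factors by $\partial_4(G_L,\cdot)$ moves the estimate the wrong way, so the target inequality $\partial_4(G)\le \mathcal I_{00}\,\partial_4(G_L,b_1^-,c_1^-)+\mathcal I_{01}\,\partial_4(G_L,b_1^-,c_1^+)+\mathcal I_{10}\,\partial_4(G_L,b_1^+,c_1^-)$ is unsupported. Even granting it, minimality only gives $i(H)>\partial_4(H)$ for smaller outerplanar $H$, not the constrained comparisons you need: for instance $i(G_L,b_1^+,c_1^-)=i(G_L\setminus N_{G_L}[b_1])$, whereas $\partial_4(G_L,b_1^+,c_1^-)$ is not $\partial_4$ of that graph, nor of $G_L\setminus b_1$ or $G_L\setminus c_1$, so there is no smaller graph to which the hypothesis applies term by term.

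The paper avoids both obstacles by putting the gadget on the coefficient side and invoking minimality \emph{once}, on the unconstrained pair $i(G_R),\partial_4(G_R)$. With $G_L=G[f_3;f_4]$ the fixed nine-vertex gadget and the cut $b_2c_1$, one reads off $i(G)=29\,\mathcal I_{00}+10\,\mathcal I_{01}+10\,\mathcal I_{10}>16\,i(G_R)$ from $\mathcal I_{00}\ge\max(\mathcal I_{01},\mathcal I_{10})$, and Lemma~\ref{lem1} collapses the dominating side to $\partial_4(G)\le 13\,\mathcal D_{11}<16\,\partial_4(G_R)$; then a single use of $i(G_R)>\partial_4(G_R)$ finishes the argument.
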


\begin{proof}
Suppose that $b_2c_1$ is the common edge of the faces $f_3$ and $f_4$. Let $G_L = G[f_3;f_4]$. Consider the $(G_L,G_R,b_2c_1)$-partition of $G$. Since $G$ is critical, we have $i(G_R) > \partial_4(G_R)$. Our goal is to find a constant $c > 0$ such that $i(G) \geq c \cdot i(G_R)$ and $\partial_4(G) \leq c \cdot \partial_4(G_R)$. It is easy to check that the following holds:
$$i(G) = i(G_L) \cdot \mathcal{I}_{00} + i(G_L,c^+_1) \cdot \mathcal{I}_{01} + i(G_L,b^+_2) \cdot \mathcal{I}_{10} = 29 \cdot \mathcal{I}_{00} + 10 \cdot \mathcal{I}_{01} + 10 \cdot \mathcal{I}_{10}.$$
Since $\mathcal{I}_{00} \geq \max(\mathcal{I}_{01},\mathcal{I}_{10})$, we have

$$i(G) > 16 \cdot (\mathcal{I}_{00} + \mathcal{I}_{01} + \mathcal{I}_{10}) = 16 \cdot i(G_R).$$
Moreover,
$$\partial_4(G) = \partial_4(G,b_2^+,c_1^+) + \partial_4(G,b_2^+,c_1^-) + \partial_4(G,b_2^-,c_1^+) + \partial_4(G,b_2^-,c_1^-) $$
$$= 5 \cdot \mathcal{D}_{11} + (2 \cdot  \mathcal{D}^{4}_{10} + \mathcal{D}^{3}_{10}) + (2 \cdot  \mathcal{D}^{4}_{01} + \mathcal{D}^{3}_{01}) + (\mathcal{D}^{44}_{00} + \mathcal{D}^{33}_{00}).$$

By Lemma~\ref{lem1}, we have $\partial_4(G) \leq  13 \cdot \mathcal{D}_{11} < 16 \cdot \partial_4(G_R)$. Therefore, $G$ is not critical.
\end{proof}

\begin{lemma}\label{lem4}
If $deg(x_3) = 3$ and $f_3$ is adjacent to some end face $f'_2$, then $G$ is not critical.
\end{lemma}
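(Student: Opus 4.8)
The plan is to run the same reduction as in Lemma~\ref{lem3}: cut $G$ along an edge near $f_3$, pass to a MOP-partition $(G_L,G_R,\cdot)$, express $i(G)$ and $\partial_4(G)$ through the partition data, and derive $i(G)>\partial_4(G)$. The one genuinely new idea is that here bounding every $\mathcal{D}$-quantity by $\mathcal{D}_{11}$ is too lossy (it would give a factor $9$ against a factor $7$ on the independence side), so I would bound the $\mathcal{D}$-quantities instead by numbers of independent sets of the graphs $G_R\setminus b_2$, $G_R\setminus c_1$, $G_R\setminus\{b_2,c_1\}$, applying criticality to each of these smaller outerplanar graphs.

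\emph{Setup.} By Lemma~\ref{lem2}, $x_3$ is a support vertex of $T(G)$, so $\deg(x_3)=3$ and, besides $f_2$ and $f_4$, it is adjacent to the end face $f'_2$. Up to the symmetry $b_1\leftrightarrow b_2$, $a_1\leftrightarrow a_3$ we may assume $f'_2=b_1c_1d_1$ with $\deg(d_1)=2$ and that $f_4$ is the face sharing the edge $b_2c_1$ with $f_3$ (such an $f_4$ exists since $k\geq 4$). Put $G_L=G[f_3;f_4]$; then $G_L$ consists of exactly the faces $f_1,f'_1,f_2,f_3,f'_2$, has the $7$ vertices $a_1,a_2,a_3,b_1,b_2,c_1,d_1$, and has $b_2c_1$ as an outer edge. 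Consider the MOP-partition $(G_L,G_R,b_2c_1)$ with $u=b_2$, $v=c_1$. Since $|V(G_L)|=7>2$ we have $|V(G_R)|<|V(G)|$, and $G_R$, $G_R\setminus b_2$, $G_R\setminus c_1$, $G_R\setminus\{b_2,c_1\}$ are outerplanar with fewer than $|V(G)|$ vertices, so by criticality $i(H)>\partial_4(H)$ for each of them; also $i(G_R)=\mathcal{I}_{00}+\mathcal{I}_{01}+\mathcal{I}_{10}$, $i(G_R\setminus c_1)=\mathcal{I}_{00}+\mathcal{I}_{10}$, $i(G_R\setminus b_2)=\mathcal{I}_{00}+\mathcal{I}_{01}$ and $i(G_R\setminus\{b_2,c_1\})=\mathcal{I}_{00}$.

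\emph{The two identities.} A direct count in the fixed $7$-vertex graph $G_L$ gives $i(G_L,b_2^-,c_1^-)=12$, $i(G_L,b_2^+,c_1^-)=4$, $i(G_L,b_2^-,c_1^+)=5$, hence
$$i(G)=12\,\mathcal{I}_{00}+4\,\mathcal{I}_{10}+5\,\mathcal{I}_{01}.$$
For $\partial_4(G)$ I would split according to the trace of a $4$-dominating set on $\{b_2,c_1\}$ and, in each of the four cases, enumerate the completions inside $G_L$ — here $a_1,a_3,d_1$ are forced in, being of degree $2$ in $G$ — recording for every completion how many neighbours of $b_2$ and of $c_1$ it places, in order to attach the right $\mathcal{D}^{k}$-quantity. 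This bookkeeping is the main obstacle and the only delicate point. The expected outcome is
$$\partial_4(G)=3\,\mathcal{D}_{11}+\bigl(2\,\mathcal{D}^{3}_{10}+\mathcal{D}^{2}_{10}\bigr)+\bigl(\mathcal{D}^{4}_{01}+\mathcal{D}^{3}_{01}\bigr)+\mathcal{D}^{32}_{00}.$$

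\emph{Finishing.} By monotonicity of the $\mathcal{D}^{k}$ and criticality on the deleted subgraphs, $\mathcal{D}_{11}\leq\partial_4(G_R)<\mathcal{I}_{00}+\mathcal{I}_{01}+\mathcal{I}_{10}$, $\mathcal{D}^{2}_{10},\mathcal{D}^{3}_{10}\leq\mathcal{D}^{4}_{10}=\partial_4(G_R\setminus c_1,b_2^+)\leq\partial_4(G_R\setminus c_1)<\mathcal{I}_{00}+\mathcal{I}_{10}$, $\mathcal{D}^{3}_{01},\mathcal{D}^{4}_{01}\leq\partial_4(G_R\setminus b_2)<\mathcal{I}_{00}+\mathcal{I}_{01}$ and $\mathcal{D}^{32}_{00}\leq\partial_4(G_R\setminus\{b_2,c_1\})<\mathcal{I}_{00}$. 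Substituting into the formula for $\partial_4(G)$ yields
$$\partial_4(G)<3(\mathcal{I}_{00}+\mathcal{I}_{01}+\mathcal{I}_{10})+3(\mathcal{I}_{00}+\mathcal{I}_{10})+2(\mathcal{I}_{00}+\mathcal{I}_{01})+\mathcal{I}_{00}=9\,\mathcal{I}_{00}+5\,\mathcal{I}_{01}+6\,\mathcal{I}_{10}.$$
Since $\mathcal{I}_{00}\geq\mathcal{I}_{10}$, the right-hand side is at most $12\,\mathcal{I}_{00}+5\,\mathcal{I}_{01}+4\,\mathcal{I}_{10}=i(G)$, so $\partial_4(G)<i(G)$, contradicting the criticality of $G$; hence $G$ is not critical.
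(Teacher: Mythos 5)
Your proposal is correct, and it reproduces the paper's decomposition exactly: the same cut $G_L=G[f_3;f_4]$ along $b_2c_1$ (with $G_L$ consisting of $f_1,f'_1,f_2,f_3,f'_2$ on seven vertices), the same identity $i(G)=12\,\mathcal{I}_{00}+5\,\mathcal{I}_{01}+4\,\mathcal{I}_{10}$, and the same identity $\partial_4(G)=3\,\mathcal{D}_{11}+(2\,\mathcal{D}^3_{10}+\mathcal{D}^2_{10})+(\mathcal{D}^4_{01}+\mathcal{D}^3_{01})+\mathcal{D}^{32}_{00}$. The latter, which you only state as the ``expected outcome,'' is indeed what the bookkeeping gives and coincides with the paper's formula: with $a_1,a_3,d_1$ forced into every admissible trace on $G_L$, the only free choices are $a_2$ and $b_1$, and the $3+3+2+1$ completions carry exactly the superscripts you list, so there is no real gap there. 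Where you genuinely diverge is the closing step. The paper introduces the auxiliary graph $G_2=G\setminus\{a_1,a_3\}$, expresses $i(G_2)=5\,\mathcal{I}_{00}+2\,\mathcal{I}_{01}+2\,\mathcal{I}_{10}$ and $\partial_4(G_2)=2\,\mathcal{D}_{11}+\mathcal{D}^3_{10}+\mathcal{D}^3_{01}+\mathcal{D}^{22}_{00}$ through the same partition, finds the single ratio $\tfrac{16}{7}$ with $i(G)>\tfrac{16}{7}\,i(G_2)$ and $\partial_4(G)\le\tfrac{16}{7}\,\partial_4(G_2)$ (the second via Lemma~\ref{lem1}, e.g.\ $2\,\mathcal{D}^3_{01}\ge\mathcal{D}^4_{01}$ and $\mathcal{D}^3_{01}\ge\mathcal{D}^{32}_{00}$, with a separate remark for $|V(G_R)|=2$), and then applies criticality once, to $G_2$. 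You instead apply criticality four times, to $G_R$, $G_R\setminus c_1$, $G_R\setminus b_2$ and $G_R\setminus\{b_2,c_1\}$, after bounding each $\mathcal{D}$-block by the corresponding $\partial_4$ using only the trivial monotonicity $\mathcal{D}^k\le\mathcal{D}^4$, and you pair these with the matching sums of $\mathcal{I}$'s; the final comparison $9\,\mathcal{I}_{00}+5\,\mathcal{I}_{01}+6\,\mathcal{I}_{10}\le 12\,\mathcal{I}_{00}+5\,\mathcal{I}_{01}+4\,\mathcal{I}_{10}$ needs only $\mathcal{I}_{00}\ge\mathcal{I}_{10}$. This buys independence from the finer parts of Lemma~\ref{lem1} and from the choice of an auxiliary graph and constant, at the mild cost of invoking minimality on several induced subgraphs; all four are nonempty outerplanar graphs smaller than $G$ (they contain $c_2$, since $G_R\supseteq f_4$ and $k\ge 4$), so the strict inequalities you use are legitimate and the contradiction with criticality goes through.
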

\begin{proof}
Denote the face $f_4$ by $c_1b_2c_2$ and let $G_L = G[f_3;f_4]$. Consider the $(G_L,G_R,b_2c_1)$-partition of graph $G$ and the $(G'_L,G_R,b_2c_1)$-partition of graph $G_2 = \{a_1,a_3\}$, where $G'_L = G_L \setminus \{a_1,a_3\}$. Again, our goal is to find a constant $c > 0$ such that $i(G) \geq c \cdot i(G_2)$ and $\partial_4(G) \leq c \cdot \partial_4(G_2)$. We have
$$i(G) = i(G_L) \cdot \mathcal{I}_{00} + i(G_L,c^+_1) \cdot \mathcal{I}_{01} + i(G_L,b^+_2) \cdot \mathcal{I}_{10}  = 12 \cdot \mathcal{I}_{00} + 5 \cdot \mathcal{I}_{01} + 4 \cdot \mathcal{I}_{10};$$
$$i(G_2) = i(G'_L) \cdot \mathcal{I}_{00} + i(G'_L,c^+_1) \cdot \mathcal{I}_{01} + i(G'_L,b^+_2) \cdot \mathcal{I}_{10}  = 5 \cdot \mathcal{I}_{00} + 2 \cdot \mathcal{I}_{01} + 2 \cdot \mathcal{I}_{10}.$$
Since $\mathcal{I}_{00} \geq \max(\mathcal{I}_{01},\mathcal{I}_{10})$, we have $i(G) > \frac{16}{7} \cdot i(G_2)$. Moreover,
$$\partial_4(G) = 3 \cdot \mathcal{D}_{11} + (2 \cdot \mathcal{D}^3_{10} + \mathcal{D}^2_{10}) + (\mathcal{D}^4_{01} + \mathcal{D}^3_{01})  + \mathcal{D}^{32}_{00};$$
$$\partial_4(G_2) = 2 \cdot \mathcal{D}_{11} + \mathcal{D}^3_{10} + \mathcal{D}^3_{01} + \mathcal{D}^{22}_{00}.$$
We now show that
$$3 \cdot \mathcal{D}_{11} + (2 \cdot \mathcal{D}^3_{10} + \mathcal{D}^2_{10}) + (\mathcal{D}^4_{01} + \mathcal{D}^3_{01})  + \mathcal{D}^{32}_{00} \leq \frac{16}{7} \cdot (2 \cdot \mathcal{D}_{11} + \mathcal{D}^3_{10} + \mathcal{D}^3_{01} + \mathcal{D}^{22}_{00}).$$
It is sufficient to prove the inequality
$$\mathcal{D}^2_{10} + \mathcal{D}^4_{01} + \mathcal{D}^{32}_{00} \leq \frac{11}{7} \cdot \mathcal{D}_{11} + \frac{2}{7} \cdot \mathcal{D}^3_{10} + \frac{9}{7}\cdot \mathcal{D}^3_{01}.$$

If $|V(G_R)| = 2$, then $\mathcal{D}^2_{10} = \mathcal{D}^{32}_{00} = 0$ and we are done. Otherwise by Lemma~1 we have $2 \cdot \mathcal{D}^3_{01} \geq \mathcal{D}^4_{01}$ and $\mathcal{D}^3_{01} \geq \mathcal{D}^{32}_{00}$, therefore $$\mathcal{D}^2_{10} + \frac 67 \cdot \mathcal{D}^4_{01} \leq \frac{11}{7} \cdot \mathcal{D}_{11} + \frac 27 \cdot  \mathcal{D}^3_{10} .$$ This completes the proof.
\end{proof}

In the rest of the chapter we assume that $deg(x_3) = 2$ and denote the face $f_4$ by $b_2c_1c_2$.

\begin{lemma}\label{lem5}
If $deg(x_4) = 3$, then $G$ is not critical.
\end{lemma}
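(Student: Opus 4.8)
The plan is to follow the scheme of Lemmas~\ref{lem3} and~\ref{lem4}: split $G$ along one edge, expand $i(G)$ and $\partial_4(G)$ through the quantities carried by the two halves, and then compare $G$ with a strictly smaller maximal outerplanar graph, on which the inequality of Theorem~\ref{thm1} holds by criticality.

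Since $deg(x_3)=2$, the face $f_3$ is adjacent only to $f_2$ and $f_4$, so $G_L:=G[f_3;f_4]$ is the fixed maximal outerplanar graph on $\{a_1,a_2,a_3,b_1,b_2,c_1\}$ carrying the faces $f_1,f'_1,f_2,f_3$, glued to $G_R$ along the outer edge $b_2c_1$. I would first record the data of this six-vertex graph: one checks directly that $i(G_L,b_2^-,c_1^-)=7$, $i(G_L,c_1^+)=5$, $i(G_L,b_2^+)=2$, and the fact that every $4$-dominating set of $G_L$ contains the degree-two vertices $a_1,a_3$ immediately yields $\partial_4(G_L,b_2^+,c_1^+)=3$ and determines the admissible placements of $\{a_2,b_1\}$ in each of the other three states of $(b_2,c_1)$. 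Running the same state analysis as in the proof of Lemma~\ref{lem4} for the partition $(G_L,G_R,b_2c_1)$ then gives
$$i(G)=7\,\mathcal{I}_{00}+5\,\mathcal{I}_{01}+2\,\mathcal{I}_{10},\qquad \partial_4(G)=3\,\mathcal{D}_{11}+2\,\mathcal{D}^2_{10}+\mathcal{D}^4_{01}+\mathcal{D}^{31}_{00}.$$
The hypothesis $deg(x_4)=3$ is used only to guarantee that $f_4$ has two neighbours inside $G_R$, so that $G_R$ has at least three vertices (whence $\mathcal{I}_{00}>\max(\mathcal{I}_{01},\mathcal{I}_{10})$) and $deg_{G_R}(b_2)\ge 3$, $deg_{G_R}(c_1)\ge 3$.

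As the smaller graph I would take $G_3:=G\setminus\{a_1,a_2,a_3\}$: this is again maximal outerplanar --- the triangle $b_1b_2c_1$ glued to $G_R$ along $b_2c_1$ --- with fewer vertices than $G$, so $i(G_3)>\partial_4(G_3)$. The analogous expansions for the partition $(b_1b_2c_1,\,G_R,\,b_2c_1)$ give $i(G_3)=2\,\mathcal{I}_{00}+\mathcal{I}_{01}+\mathcal{I}_{10}$ and $\partial_4(G_3)=\mathcal{D}_{11}+\mathcal{D}^2_{10}+\mathcal{D}^2_{01}+\mathcal{D}^{11}_{00}$. On the independence side $i(G)-3\,i(G_3)=\mathcal{I}_{00}+2\,\mathcal{I}_{01}-\mathcal{I}_{10}>0$ since $\mathcal{I}_{00}>\mathcal{I}_{10}$, hence $i(G)>3\,i(G_3)$. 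It therefore suffices to prove
$$\partial_4(G)\le 3\,\partial_4(G_3),\qquad\text{i.e.}\qquad \mathcal{D}^4_{01}+\mathcal{D}^{31}_{00}\ \le\ \mathcal{D}^2_{10}+3\,\mathcal{D}^2_{01}+3\,\mathcal{D}^{11}_{00},$$
for then $i(G)>3\,i(G_3)>3\,\partial_4(G_3)\ge\partial_4(G)$, contradicting the criticality of $G$.

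The last displayed inequality is the heart of the argument and the step I expect to cost the most work: the $\mathcal{D}_{11}$-terms cancel exactly, so no slack is available from them, and the ``degraded'' quantities $\mathcal{D}^4_{01}$ and $\mathcal{D}^{31}_{00}$ must be absorbed entirely by $\mathcal{D}^2_{10},\mathcal{D}^2_{01},\mathcal{D}^{11}_{00}$. Lemma~\ref{lem1}(1) gives $\mathcal{D}^{31}_{00}\le\min(\mathcal{D}^3_{01},\mathcal{D}^3_{10})$, Lemma~\ref{lem1}(2) gives $\mathcal{D}^4_{01}\le 2\,\mathcal{D}^3_{01}$, and Lemma~\ref{lem1}(3) gives $\mathcal{D}^3_{01}\le 3\,\mathcal{D}^2_{01}$, improved to $\le 2\,\mathcal{D}^2_{01}$ when $deg_{G_R}(b_2)\ge 4$; these bounds are not quite sufficient on their own, so I would split into cases according to whether $deg_{G_R}(b_2)$ and $deg_{G_R}(c_1)$ equal $3$ or are at least $4$. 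When $deg_{G_R}(c_1)=3$ the vertex $c_1$ has only two neighbours in $V(G_R)\setminus\{b_2,c_1\}$, which forces $\mathcal{D}^{31}_{00}=\mathcal{D}^{11}_{00}=0$ and leaves $\mathcal{D}^4_{01}\le\mathcal{D}^2_{10}+3\,\mathcal{D}^2_{01}$, which should follow by pushing the $G_R$-side $4$-dominating sets through the fan of $G_R$ around $c_2$ (the low-degree neighbours of $b_2$ there being forced into every such set); a symmetric reduction handles $deg_{G_R}(b_2)=3$, and when both degrees are at least $4$ the sharper form of Lemma~\ref{lem1}(3) supplies the needed room. In each case the inequality should come down to an elementary comparison of the remaining $\mathcal{D}$-quantities, completing the proof.
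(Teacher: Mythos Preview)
Your set-up and the four displayed expansions are correct: with the cut at $b_2c_1$ and $G_L=G[f_3;f_4]$ one indeed gets
\[
i(G)=7\mathcal{I}_{00}+5\mathcal{I}_{01}+2\mathcal{I}_{10},\qquad
\partial_4(G)=3\mathcal{D}_{11}+2\mathcal{D}^2_{10}+\mathcal{D}^4_{01}+\mathcal{D}^{31}_{00},
\]
and $i(G_3)=2\mathcal{I}_{00}+\mathcal{I}_{01}+\mathcal{I}_{10}$, $\partial_4(G_3)=\mathcal{D}_{11}+\mathcal{D}^2_{10}+\mathcal{D}^2_{01}+\mathcal{D}^{11}_{00}$, so the whole argument reduces, as you say, to
\[
\mathcal{D}^4_{01}+\mathcal{D}^{31}_{00}\ \le\ \mathcal{D}^2_{10}+3\mathcal{D}^2_{01}+3\mathcal{D}^{11}_{00}.
\]
The gap is that this inequality is never actually proved. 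Lemma~\ref{lem1} gives $\mathcal{D}^{31}_{00}\le\mathcal{D}^1_{10}\le\mathcal{D}^2_{10}$, which disposes of the second summand on the left; but for the remaining term Lemma~\ref{lem1} yields only $\mathcal{D}^4_{01}\le 2\mathcal{D}^3_{01}\le 6\mathcal{D}^2_{01}$ when $deg_{G_R}(b_2)=3$ (and $\le 4\mathcal{D}^2_{01}$ when $deg_{G_R}(b_2)\ge4$), whereas you need $\mathcal{D}^4_{01}\le 3\mathcal{D}^2_{01}+3\mathcal{D}^{11}_{00}$. Since the $\mathcal{D}_{11}$-coefficients cancel exactly there is no slack to borrow, and your sentences ``should follow by pushing the $G_R$-side $4$-dominating sets through the fan of $G_R$ around $c_2$'' and ``should come down to an elementary comparison'' are not arguments --- they do not specify any injection or inequality, and it is not clear such a uniform bound exists for arbitrary $G_R$ of this shape. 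In effect you have pushed all of the difficulty of the lemma into a single unproved inequality about $G_R$.

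The paper does something quite different precisely to avoid this bottleneck: it cuts one step deeper, taking $G_L=G[f_4;f_5]$ so that the branching face $f_4$ and its extra neighbour $f'_3$ lie on the \emph{left}. Now $G_L$ is no longer a fixed six-vertex graph; its isomorphism type depends on which edge of $f_4$ carries $f'_3$ and on $deg(x'_3)$, which forces the two-case, eight-subcase analysis you see in the paper. The payoff is that in each subcase the left block is explicit, the coefficients in front of the $\mathcal{D}$-quantities are concrete numbers, and the needed comparison (either with $i(G_R)$ or with a small explicit $G'$) follows directly from Lemma~\ref{lem1} without any residual inequality about an unknown $G_R$. So the hypothesis $deg(x_4)=3$ is not ``only'' used to ensure $deg_{G_R}(b_2),deg_{G_R}(c_1)\ge3$; it is what makes the deeper cut informative, and the paper's longer case split is the price for getting inequalities that actually close.
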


\begin{proof}
Let $G_L = G[f_4;f_5]$. and $f'_3$ be the face adjacent to $f_4$, other than $f_3$ and $f_5$. We have two cases depending on the location of $f'_3$.

\textbf{Case 1.}  $f'_3$ contains the edge $b_2c_2$.  We consider the $(G_L,G_R,c_1c_2)$-partition of $G$. 

\textbf{Subcase 1.} $deg(x'_3) = 1$. Let $G_3 = G \setminus \{a_1,a_2,a_3\}$. Consider the $(G'_L,G_R,c_1c_2)$-partition of $G_3$, where $G'_L = G_L \setminus \{a_1,a_2,a_3\}$. We have
$$i(G) = 16 \cdot \mathcal{I}_{00} + 7\cdot \mathcal{I}_{01} + 10\cdot \mathcal{I}_{10} > 3 \cdot (5\cdot \mathcal{I}_{00} + 2\cdot \mathcal{I}_{01} + 2\cdot \mathcal{I}_{10}) = 3 \cdot i(G_3);$$
$$\partial_4(G) = 4 \cdot \mathcal{D}_{11} + (2\cdot \mathcal{D}^3_{10} +  \mathcal{D}^2_{10}) + (3\cdot \mathcal{D}^3_{01} +  \mathcal{D}^2_{01}) + (2\cdot \mathcal{D}^{22}_{00}+ \mathcal{D}^{11}_{00});$$
$$\partial_4(G_3) = 2\cdot \mathcal{D}_{11} + \mathcal{D}^3_{10} + \mathcal{D}^3_{01} + \mathcal{D}^{22}_{00}.$$

Clearly, $\partial_4(G) \leq 3 \cdot \partial_4(G_3) < 3 \cdot i(G_3) \leq i(G)$, as required.

\textbf{Subcase 2.} $deg(x'_3) = 3$. By the previous lemma, $f'_3$ is adjacent to some end faces $f'_2$ and $f''_2$.  We have
$$i(G) = 39\cdot \mathcal{I}_{00} + 14 \cdot \mathcal{I}_{01} + 25 \cdot \mathcal{I}_{10} \geq 26 \cdot ( \mathcal{I}_{00} +  \mathcal{I}_{01} +  \mathcal{I}_{10}) = 26 \cdot i(G_R);$$
$$\partial_4(G) = 7 \cdot \mathcal{D}_{11} + (3 \cdot \mathcal{D}^4_{10} + \mathcal{D}^3_{10}) + (4 \cdot \mathcal{D}^4_{01} + \mathcal{D}^3_{01}) + (2 \cdot \mathcal{D}^{12}_{00} + \mathcal{D}^{01}_{00}).$$
By Lemma~1, we have $\partial_4(G)  \leq 19 \cdot \mathcal{D}_{11} < 26 \cdot  \partial_4(G_R)$.

\textbf{Subcase 3.} $deg(x'_3) = 2$ and $deg(b_2) = 6$. In this case $f'_3$ is adjacent to a face $f'_2$ which is adjacent to end faces $f'_1$ and $f''_1$. Therefore,
$$i(G) = 59\cdot \mathcal{I}_{00} + 14\cdot \mathcal{I}_{01} + 35\cdot \mathcal{I}_{10} \geq 36 \cdot (\mathcal{I}_{00} + \mathcal{I}_{01} + \mathcal{I}_{10}) = 36 \cdot i(G_R);$$
$$\partial_4(G) = 11 \cdot \mathcal{D}_{11} + (6 \cdot \mathcal{D}^3_{01} + 2 \cdot \mathcal{D}^2_{01}) + (3 \cdot \mathcal{D}^4_{10} + \mathcal{D}^3_{10}) + 2 \cdot (\mathcal{D}^{24}_{00} + \mathcal{D}^{13}_{00})$$ $$\leq 26 \cdot \mathcal{D}_{11} = 26 \cdot \partial_4(G_R).$$

\textbf{Subcase 4.} $deg(x'_3) = 2$ and $deg(b_2) = 8$. Again, $f'_3$ is adjacent to a face $f'_2$ which is adjacent to end faces $f'_1$ and $f''_1$. Therefore,
$$i(G) = 53\cdot \mathcal{I}_{00} + 35\cdot \mathcal{I}_{01} + 35\cdot \mathcal{I}_{10} \geq 41 \cdot (\mathcal{I}_{00} + \mathcal{I}_{01} + \mathcal{I}_{10}) = 41 \cdot i(G_R);$$
$$\partial_4(G) \leq 4 \cdot \partial_4(G,b^+_2,c^+_2) = 4 \cdot 10 \cdot \mathcal{D}_{11} \leq 40 \cdot i(G_R).$$

\textbf{Case 2.} The face $f'_3$ contains the edge $c_1c_2$.  Consider the $(G_L,G_R,b_2c_2)$-partition of $G$. 

\textbf{Subcase 1.} $deg(x'_3) = 1$.  Let $G_3 = G \setminus \{a_1,a_2,a_3\}$. Consider the $(G'_L,G_R,b_2c_2)$-partition of $G_3$, where $G'_L = G_L \setminus \{a_1,a_2,a_3\}$. We have
$$i(G) = 19 \cdot \mathcal{I}_{00} + 7 \cdot \mathcal{I}_{01} + 4 \cdot \mathcal{I}_{10} > 3 \cdot (5 \cdot \mathcal{I}_{00} + 2 \cdot \mathcal{I}_{01} + 2 \cdot \mathcal{I}_{10}) = 3 \cdot i(G_3);$$
$$\partial_4(G) = 5 \cdot \mathcal{D}_{11} + 3 \cdot \mathcal{D}^3_{10} + \mathcal{D}^4_{01} +  \mathcal{D}^{42}_{00} \leq 3 \cdot (2 \cdot \mathcal{D}_{11} +  \mathcal{D}^3_{10} +  \mathcal{D}^3_{01} + \mathcal{D}^{22}_{00}) = 3 \cdot \partial_4(G_3).$$

By Lemma~\ref{lem1}, we have $\mathcal{D}_{11} + 3 \cdot \mathcal{D}^3_{01} > \mathcal{D}^{42}_{00} + \mathcal{D}^4_{01}$, as required.

\textbf{Subcase 2.} $deg(x'_3) = 3$. By the previous lemma, $f'_3$ is adjacent to end faces $f'_2$ and $f''_2$. We have
$$i(G) = 45 \cdot \mathcal{I}_{00} + 14 \cdot \mathcal{I}_{01} + 10 \cdot \mathcal{I}_{10} \geq 23 \cdot (\mathcal{I}_{00} + \mathcal{I}_{01} + \mathcal{I}_{10}) = 23 \cdot i(G_R);$$
$$\partial_4(G) = 8 \cdot \mathcal{D}_{11} + (3 \cdot \mathcal{D}^4_{10}  + 2 \cdot \mathcal{D}^3_{10}) + 3 \cdot \mathcal{D}^4_{01} + \mathcal{D}^{43}_{00} \leq 17 \cdot \mathcal{D}_{11} < 23 \cdot \partial_4(G_R).$$

\textbf{Subcase 3.} $deg(x'_3) = 2$ and $deg(c_1) = 4$. $f'_3$ is adjacent to a face $f'_2$ which is adjacent to end faces $f'_1$ and $f''_1$. We have
$$i(G) = 74 \cdot \mathcal{I}_{00} + 14 \cdot \mathcal{I}_{01} + 14 \cdot \mathcal{I}_{10} \geq 34 \cdot (\mathcal{I}_{00} + \mathcal{I}_{01} + \mathcal{I}_{10}) = 34 \cdot i(G_R);$$
$$\partial_4(G) = 13 \cdot \mathcal{D}_{11} + 3 \cdot \mathcal{D}^4_{10} + 3 \cdot \mathcal{D}^4_{01} + \mathcal{D}^{44}_{00}  \leq 20 \cdot \mathcal{D}_{11} < 34 \cdot \partial_4(G_R).$$

\textbf{Subcase 4.} $deg(x'_3) = 2$ and $deg(c_1) = 6$. $f'_3$ is adjacent to a face $f'_2$ which is adjacent to end faces $f'_1$ and $f''_1$. We have $$i(G) = 59 \cdot \mathcal{I}_{00} + 35 \cdot \mathcal{I}_{01} + 14 \cdot \mathcal{I}_{10} > 36 \cdot (\mathcal{I}_{00} + \mathcal{I}_{01} + \mathcal{I}_{10}) = 36 \cdot i(G_R);$$
$$\partial_4(G) \leq 3 \cdot \partial_4(G,b^+_2,c^+_2) + \partial_4(G,b^-_2,c^-_2) = 3 \cdot 11 \cdot \mathcal{D}_{11} + (2 \cdot \mathcal{D}^{42}_{00} + \mathcal{D}^{31}_{00}) $$ 

By Lemma~1, we have 
$\partial_4(G) \leq 36 \cdot \mathcal{D}_{11} \leq 36 \cdot \partial_4(G_R)$. 
\end{proof}

\begin{lemma}\label{lem6}
If $deg(x_3) = deg(x_4) = 2$ and $deg(c_1) = 3$, then $G$ is not critical.
\end{lemma}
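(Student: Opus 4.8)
Throughout, the plan is to argue just as in Lemmas~\ref{lem3}--\ref{lem5}: exhibit a MOP-partition of $G$ whose left part is completely fixed by the hypotheses, and compare $G$ with the right part $G_R$, which is outerplanar with fewer vertices and hence satisfies $i(G_R) > \partial_4(G_R)$ because $G$ is critical.

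Recall that $f_1 = a_1a_2b_1$, $f_1' = a_2a_3b_2$, $f_2 = b_1b_2a_2$, $f_3 = b_1b_2c_1$ and $f_4 = b_2c_1c_2$ are fixed, with $deg(a_1) = deg(a_3) = 2$; by hypothesis $deg(c_1) = 3$, so the neighbours of $c_1$ are exactly $b_1, b_2, c_2$. Since $deg(x_3) = 2$, the graph $G_L := G[f_3;f_4]$ is the $6$-vertex MOP on $\{a_1,a_2,a_3,b_1,b_2,c_1\}$ carrying the faces $f_1,f_1',f_2,f_3$. I would work with the MOP-partition $(G_L,G_R,b_2c_1)$, with $b_2,c_1$ in the roles of $u,v$. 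As $|V(G_R)| = |V(G)| - 4 < |V(G)|$ and $G_R \in \mathcal{OP}$, criticality of $G$ gives $i(G_R) > \partial_4(G_R)$.

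Enumerating the independent sets of $G_L$ by their trace on $\{b_2,c_1\}$ gives $i(G) = 7\,\mathcal{I}_{00} + 5\,\mathcal{I}_{01} + 2\,\mathcal{I}_{10}$; since $\mathcal{I}_{00} \ge \max(\mathcal{I}_{01},\mathcal{I}_{10})$ and $\mathcal{I}_{01} \ge 1$, this gives $i(G) > \tfrac{9}{2}\, i(G_R)$. For $4$-dominating sets, the vertices $a_1, a_3, c_1$ all have degree at most $3$, hence belong to every $4$-DS of $G$, so $\partial_4(G) = \partial_4(G,b_2^+,c_1^+) + \partial_4(G,b_2^-,c_1^+)$. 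There are exactly three $4$-DS of $G_L$ containing both $b_2$ and $c_1$, giving $\partial_4(G,b_2^+,c_1^+) = 3\,\mathcal{D}_{11}$; and the only $4$-DS of $G_L$ containing $c_1$ but not $b_2$ is $\{a_1,a_2,a_3,b_1,c_1\}$, in which $b_2$ already has the four neighbours $a_2,a_3,b_1,c_1$ inside $D_L$, so (using $b_2c_1 \in E(G_R)$ and $c_1 \in D_R$) nothing further is demanded of the $G_R$-side, whence $\partial_4(G,b_2^-,c_1^+) = \mathcal{D}^4_{01}$. Thus $\partial_4(G) = 3\,\mathcal{D}_{11} + \mathcal{D}^4_{01} \le 4\,\mathcal{D}_{11}$ by Lemma~\ref{lem1}. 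Combining, $\partial_4(G) \le 4\,\mathcal{D}_{11} \le 4\,\partial_4(G_R) < 4\,i(G_R) < \tfrac{9}{2}\, i(G_R) < i(G)$, so $G$ is not critical.

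The point that really needs care is the choice of partitioning edge. The natural choice $b_2c_2$ (cutting the tail just past $f_4$) leaves all four terms $\partial_4(G,\cdot,\cdot)$ nonzero and yields only $\partial_4(G) \le 8\,\mathcal{D}_{11}$ against $i(G) > 7\, i(G_R)$, which is off by a single unit; partitioning along $b_2c_1$ instead exploits $deg(c_1) = 3$ to annihilate the two terms $\partial_4(G,\cdot,c_1^-)$ and leaves ample slack. The remaining work is the verification of the two identities above, each a short finite enumeration over the $6$-vertex gadget $G_L$, carried out exactly as in the corresponding computations of Lemmas~\ref{lem3}--\ref{lem5}.
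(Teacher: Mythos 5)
Your proposal is correct and takes essentially the same route as the paper: the same $(G_L,G_R,b_2c_1)$-partition, the same identity $i(G) = 7\cdot\mathcal{I}_{00} + 5\cdot\mathcal{I}_{01} + 2\cdot\mathcal{I}_{10}$, and the same count $\partial_4(G) = 3\cdot\mathcal{D}_{11} + \mathcal{D}^4_{01} \leq 4\cdot\mathcal{D}_{11} \leq 4\cdot\partial_4(G_R)$, concluded via criticality of $G$ applied to $G_R$. The only differences are cosmetic: you use the constant $\tfrac{9}{2}$ where the paper uses $4$, and you spell out the finite verifications (the forced membership of $a_1,a_3,c_1$ and the explicit left-hand traces) that the paper leaves implicit.
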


\begin{proof}
If $deg(x_3) = deg(x_4) = 2$ and $deg(c_1) = 3$, then $f_5$ contains $b_2c_2$. Let $G_L = G[f_3;f_4]$. For the $(G_L,G_R,b_2c_1)$-partition of $G$ we have
$$i(G) = 7 \cdot \mathcal{I}_{00} + 5 \cdot \mathcal{I}_{01} + 2 \cdot \mathcal{I}_{10} > 4 \cdot (\mathcal{I}_{00} + \mathcal{I}_{01} + \mathcal{I}_{10}) = 4 \cdot i(G_R);$$
$$\partial_4(G) = \partial_4(G,b^+_2,c^+_1) + \partial_4(G,b^-_2,c^+_1) = 3 \cdot \mathcal{D}_{11} + \mathcal{D}^4_{01} \leq 4 \cdot  \mathcal{D}_{11} \leq \partial_4(G_R).$$

Therefore, $G$ is not critical.
\end{proof}

\begin{lemma}\label{lem7}
If $deg(x_3) = deg(x_4) = 2$ and $deg(x_5) = 3$, then $G$ is not critical.
\end{lemma}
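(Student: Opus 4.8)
The argument runs parallel to the proof of Lemma~\ref{lem5}. Since $deg(x_5)=3$, the face $f_5$ has a third neighbour $f'_4$ in $T(G)$ besides $f_4$ and the face $f_6$ continuing the diametral path. We may assume that $f_5$ contains the edge $c_1c_2$, and write $f_5=c_1c_2d_1$: otherwise $f_5$ contains $b_2c_2$, hence $deg(c_1)=3$, and then $G$ is not critical by Lemma~\ref{lem6}. Thus $f_6$ and $f'_4$ use the two remaining edges $c_1d_1$, $c_2d_1$ of $f_5$; we treat the case where $f_6$ contains $c_1d_1$, the case where it contains $c_2d_1$ being entirely analogous.

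The plan is to take the MOP-partition $(G_L,G_R,c_1d_1)$ with $G_L=G[f_5;f_6]$. Here $G_L$ is the union of the fixed part $f_1\cup f'_1\cup f_2\cup f_3\cup f_4\cup f_5$, on the vertex set $\{a_1,a_2,a_3,b_1,b_2,c_1,c_2,d_1\}$, with the subgraph of $G$ hanging from $f'_4$; once that subgraph is fixed, $G_L$ is one concrete maximal outerplanar graph. As in Lemmas~\ref{lem3}--\ref{lem6}, expanding $i(G)$ over the states of $c_1,d_1$ gives
$$i(G)=i(G_L,c^-_1,d^-_1)\cdot\mathcal{I}_{00}+i(G_L,c^-_1,d^+_1)\cdot\mathcal{I}_{01}+i(G_L,c^+_1,d^-_1)\cdot\mathcal{I}_{10},$$
and likewise $\partial_4(G)$ becomes an explicit nonnegative combination of $\mathcal{D}_{11}$, $\mathcal{D}^k_{10}$, $\mathcal{D}^k_{01}$ and $\mathcal{D}^{kl}_{00}$, whose coefficients count the $4$-dominating and ``almost $4$-dominating'' configurations of $G_L$.

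It then suffices, in each case, to find a constant $c>0$ and a proper outerplanar subgraph $H$ of $G$ --- either $G_R$, or the graph $G\setminus\{a_1,a_2,a_3\}$, whose corresponding counts come from the same partition with the smaller left part $G_L\setminus\{a_1,a_2,a_3\}$ --- such that $i(G)>c\cdot i(H)$ and $\partial_4(G)\leq c\cdot\partial_4(H)$. Since $|V(H)|<|V(G)|$ and $G$ is critical, $i(H)>\partial_4(H)$, so $\partial_4(G)\leq c\cdot\partial_4(H)<c\cdot i(H)<i(G)$, contradicting criticality; this proves the lemma. The bound $i(G)>c\cdot i(H)$ is immediate from the displayed expansion and $\mathcal{I}_{00}>\max(\mathcal{I}_{01},\mathcal{I}_{10})$, which holds because $|V(G_R)|\geq3$. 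The real work is the bound $\partial_4(G)\leq c\cdot\partial_4(H)$: once the expansions are inserted it reduces to an inequality purely among the $\mathcal{D}$-quantities, and one closes it with Lemma~\ref{lem1} --- the basic estimates $\mathcal{D}_{11}\geq\mathcal{D}^k_{10},\mathcal{D}^k_{01}$ and $\min(\mathcal{D}^k_{10},\mathcal{D}^k_{01})\geq\mathcal{D}^{kl}_{00}$, supplemented, when these do not suffice, by parts~2 and~3 of Lemma~\ref{lem1}.

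The main obstacle is the case distinction over the shape of the subgraph hanging from $f'_4$. By the diametral property the corresponding subtree of $T(G)$ has depth at most $\min(4,k-5)$, and, since a suitable diametral path may be chosen to run through $f'_4$, every such subtree that is too deep or too branchy would yield a re-rooted diametral path to which one of Lemmas~\ref{lem2}--\ref{lem6} applies --- so that $G$ is not critical with nothing left to prove. What remains is a short list of shapes: $deg(x'_4)\in\{1,3\}$ (an end face, respectively a face adjacent to two end faces), or $deg(x'_4)=2$ with the branch continuing around $c_2$ or around $d_1$, which pins down $deg(c_2)$ or $deg(d_1)$. For the lighter branches the comparison goes through with ample slack; for the heaviest ones the ratios $i(G)/i(H)$ and $\partial_4(G)/\partial_4(H)$ are close, and one must choose $H$ (typically $G\setminus\{a_1,a_2,a_3\}$) and keep track of the individual $\mathcal{D}$-terms with care, exactly as in the four subcases of each case of Lemma~\ref{lem5}.
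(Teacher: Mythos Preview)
Your outline matches the paper's strategy exactly: split on the location of $f'_4$, take the MOP-partition at the edge $f_5\cap f_6$, enumerate the possible shapes of the branch at $f'_4$ using the diametral bound and Lemmas~\ref{lem2}--\ref{lem6}, and in each subcase compare $G$ to a smaller outerplanar graph $H$ via a common constant $c$. But what you have written is a plan, not a proof. The entire substance of Lemma~\ref{lem7} lies in the subcase-by-subcase numerical verification --- computing the coefficients $i(G_L,\ldots)$ and the multiplicities in the $\mathcal{D}$-expansion, choosing the right $c$ and $H$, and then closing the resulting inequality with Lemma~\ref{lem1} --- and none of this is carried out. ``The comparison goes through with ample slack'' and ``keep track of the individual $\mathcal{D}$-terms with care'' are not arguments; in the paper each subcase is a separate calculation, and several of them (e.g.\ Case~1, Subcase~1a/1b and Subcases~3--4) are genuinely tight and require the sharper parts of Lemma~\ref{lem1}.

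Two specifics also need correction. First, the two positions of $f_6$ are \emph{not} ``entirely analogous'': the paper treats Case~1 ($f'_4\supset c_1d_1$, partition on $c_2d_1$) and Case~2 ($f'_4\supset c_2d_1$, partition on $c_1d_1$) separately, and the numerics differ --- in Case~1, Subcase~1 the paper must even split further according to $\min(\deg_G(c_2),\deg_G(d_1))$. Second, your menu of comparison graphs is too small. Besides $G_R$ and $G\setminus\{a_1,a_2,a_3\}$, the paper needs $G_4=G\setminus\{a_1,a_2,a_3,b_1\}$, $G_6=G\setminus\{a_1,a_2,a_3,b_1,b_2,d_0\}$, and the auxiliary graphs $G'$, $G''$ obtained by replacing $G_L$ by a five-vertex stub; without these the required constants $c$ do not exist in the tighter subcases.
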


\begin{proof}
We denote the faces $f_2, f_3, f_4$ by $a_2b_1b_2, b_1b_2c_1, b_2c_1c_2$ respectively. By the previous lemma, $deg(c_1) \geq 4$ and the face $f_5$ corresponds to the triangle $c_1c_2d_1$. Let $f'_4$ be the face adjacent to $f_5$, other than $f_4$ and $f_6$. Let $G_L = G[f_5;f_6]$. There are two possible cases depending on the location of $f'_4$ in $G$.

\textbf{Case 1.} The face $f'_4$ contains the edge $c_1d_1$. Let $d_0$ be the third vertex of $f'_4$. Consider the $(G_L,G_R,c_2d_1)$-partition of $G$.

\textbf{Subcase 1a.} $deg(x'_4) = 1$ and $\min(deg_G(c_2),deg_G(d_1)) \geq 5$.  Consider the graph $G_6 = G \setminus \{a_1,a_2,a_3,b_1,b_2,d_0\}$ and the $(G_L \cap V(G_6),G_R,c_2d_1)$-partition of $G_6$. We have
$$i(G) = 23 \cdot \mathcal{I}_{00} + 9 \cdot \mathcal{I}_{01} + 14 \cdot \mathcal{I}_{10} > 10 \cdot (2 \cdot \mathcal{I}_{00} +  \mathcal{I}_{01} +  \mathcal{I}_{10}) = 10 \cdot i(G_6).$$
We now prove that
$$\partial_4(G) = 7 \cdot \mathcal{D}_{11} + (4 \cdot \mathcal{D}^3_{10} + 2 \cdot \mathcal{D}^2_{10}) + (3 \cdot \mathcal{D}^3_{01} + 3 \cdot \mathcal{D}^2_{01}) + (3 \cdot \mathcal{D}^{22}_{00}+\mathcal{D}^{12}_{00}) \leq $$ $$10 \cdot ( \mathcal{D}_{11} +  \mathcal{D}^2_{10} +  \mathcal{D}^2_{01} +  \mathcal{D}^{11}_{00}) =  10 \cdot \partial_4(G_6).$$

It remains to show that $$3 \cdot \mathcal{D}_{11} + 8 \cdot \mathcal{D}^2_{10} + 7 \cdot \mathcal{D}^2_{01} \geq 4 \cdot \mathcal{D}^3_{10} + 3 \cdot \mathcal{D}^3_{01} + 3 \cdot \mathcal{D}^{22}_{00} +  \mathcal{D}^{12}_{00}.$$

Since $\min(deg(a),deg(c)) \geq 5$, we have $3 \cdot \mathcal{D}^2_{10} \geq \mathcal{D}^3_{10}$ and $3 \cdot \mathcal{D}^2_{01} \geq \mathcal{D}^3_{01}$ by Lemma~\ref{lem1}. Moreover, $\min(\mathcal{D}^2_{01}, \mathcal{D}^2_{10}) \geq \mathcal{D}^{22}_{00}$. If $deg(a) = 5$, then  $ \mathcal{D}^{12}_{00} = 0$ and we are done. If $deg(a) \geq 6$, then $2 \cdot \mathcal{D}^2_{01} \geq  \mathcal{D}^3_{01}$ and we are done. 

\textbf{Subcase 1b.} $deg(x'_4) = 1$ and $\min(deg(a),deg(c)) = 4$. Consider the graph $G_4 = G \setminus \{a_1,a_2,a_3,b_1\}$ and the $(G_L \cap V(G_4),G_R,c_2d_1)$-partition of $G_4$. We have
$$i(G) = 23 \cdot \mathcal{I}_{00} + 9 \cdot \mathcal{I}_{01} + 14 \cdot \mathcal{I}_{10} \geq \frac 92 \cdot (5 \cdot \mathcal{I}_{00} + 2 \cdot \mathcal{I}_{01} + 2 \cdot \mathcal{I}_{10}) = \frac 92 \cdot i(G_4).$$
We now prove that
$$\partial_4(G) = 7 \cdot \mathcal{D}_{11} + (4 \cdot \mathcal{D}^3_{10} + 2 \cdot \mathcal{D}^2_{10}) + (3 \cdot \mathcal{D}^3_{01} + 3 \cdot \mathcal{D}^2_{01}) + (3 \cdot \mathcal{D}^{22}_{00}+ \mathcal{D}^{11}_{00}) \leq$$ $$\frac 92 \cdot (2 \cdot \mathcal{D}_{11} +  \mathcal{D}^3_{10} +  \mathcal{D}^3_{01} + \mathcal{D}^{22}_{00}) = \frac 92 \cdot \partial_4(G_4).$$
It suffices to show that $$2 \cdot \mathcal{D}_{11} + \frac 12  \cdot \mathcal{D}^3_{10} + \frac 32 \cdot \mathcal{D}^3_{01} \geq 2 \cdot \mathcal{D}^2_{10} + 3 \cdot \mathcal{D}^2_{01}.$$ Since $\min(deg(a),deg(c)) = 4$, we have $\textrm{min}( \mathcal{D}^2_{10}, \mathcal{D}^2_{01}) = 0$, therefore the inequality holds.

\textbf{Subcase 2.} $deg(x'_4) = 3$. The face $f'_4$ is adjacent to some faces $f'_3$ and $f''_3$. By the previous lemmas, both $f'_3$ and $f''_3$ are end faces. Therefore,
$$i(G) = 55 \cdot \mathcal{I}_{00} + 18 \cdot \mathcal{I}_{01} + 35 \cdot \mathcal{I}_{10} \geq 36 \cdot ( \mathcal{I}_{00} +  \mathcal{I}_{01} +  \mathcal{I}_{10}) = 36 \cdot i(G_R);$$
$$\partial_4(G) \leq 2 \cdot \partial_4(G,c^+_2,d^+_2) + 2 \cdot \partial_4(G,c^+_2,d^-_2) =  2 \cdot (11 \cdot \mathcal{D}_{11} + 4 \cdot \mathcal{D}^4_{10} + 2 \cdot \mathcal{D}^3_{10})$$
$$\leq  34 \cdot \mathcal{D}_{11} < 36 \cdot \partial_4(G_R).$$

In the remaining subcases we consider the induced subgraph $G'$ of $G$ with the vertex set $(V(G) \setminus V(G_L)) \cup \{b_2,c_2,d_2\}$ and the $(G'_L,G_R,c_1d_1)$-partition of $G$, where $G'_L$ is an induced subgraph of $G_L$ with the vertex set $\{b_2,c_1,c_2,d_0,d_1\}$.

\textbf{Subcase 3.} $deg(x'_4) = deg(x'_3) = 2$. The face $f'_3$ is adjacent to some face $f'_2$. By Lemma~\ref{lem3} $f'_2$ is adjacent to end faces $f'_1$ and $f''_1$. Moreover, by the previous lemmas, the faces $f'_3$ and $f'_4$ does not contain any vertices of degree 3.

\textbf{Subcase 3a.} $deg(c_1) \geq 6$. If $deg(c_1) \geq 7$, then $deg(d_0) = 3$, a contradiction. Suppose that $deg(c_1) = 6$. 

$$i(G) = 106 \cdot \mathcal{I}_{00} + 63 \cdot \mathcal{I}_{01} + 63 \cdot \mathcal{I}_{10} > 20 \cdot (5 \cdot \mathcal{I}_{00} + 2 \cdot \mathcal{I}_{01} + 2 \cdot \mathcal{I}_{10}) = 20 \cdot i(G');$$
$$\partial_4(G) \leq 25 \cdot \mathcal{D}_{11} + (12 \cdot \mathcal{D}^3_{10} + 10 \cdot \mathcal{D}^2_{10}) + (12 \cdot \mathcal{D}^3_{01} + 10 \cdot \mathcal{D}^2_{01}) $$
$$+ 9 \cdot \mathcal{D}^{22}_{00} + 3 \cdot \mathcal{D}^{21}_{00} + 3 \cdot \mathcal{D}^{12}_{00} + 4 \cdot \mathcal{D}^{11}_{00}$$
$$\leq 25 \cdot \mathcal{D}_{11} + 22 \cdot \mathcal{D}^3_{10} + 22 \cdot \mathcal{D}^3_{01} + 19 \cdot \mathcal{D}^{22}_{00}$$
 $$< 20 \cdot (2 \cdot \mathcal{D}_{11} + \mathcal{D}^3_{10} + \mathcal{D}^3_{01} + \mathcal{D}^{22}_{00}) = 20 \cdot \partial_4(G').$$

\textbf{Subcase 3b.} $deg(c_1) = 5$. If $deg_{G_L}(d_1) \geq 5$, then $deg(d_0) = 3$ and we use Lemma~\ref{lem6}. 
Suppose that $deg_{G_L}(d_1) = 4$ and $deg_{G_L}(d_0) = 5$. We have
$$i(G) = 116 \cdot \mathcal{I}_{00} + 45 \cdot \mathcal{I}_{01} + 63 \cdot \mathcal{I}_{10} > 23 \cdot ( 5 \cdot \mathcal{I}_{00} +  2 \cdot \mathcal{I}_{01} + 2 \cdot \mathcal{I}_{10}) = 23 \cdot i(G');$$
$$\partial_4(G) = 28 \cdot \mathcal{D}_{11} + (8 \cdot \mathcal{D}^4_{10} + 14 \cdot \mathcal{D}^3_{10} + 2 \cdot \mathcal{D}^2_{10}) + (12 \cdot \mathcal{D}^3_{01} + 10 \cdot \mathcal{D}^2_{01})$$ 
$$+ (6 \cdot D^{23}_{00} + 3 \cdot D^{22}_{00} + 2 \cdot D^{13}_{00} + 1 \cdot D^{12}_{00})\leq $$
$$\leq 28 \cdot \mathcal{D}_{11} + 8 \cdot \mathcal{D}_{11} + 16 \cdot \mathcal{D}^3_{10} + 22 \cdot \mathcal{D}^3_{01} + 8 \cdot \mathcal{D}_{11} + 4 \cdot \mathcal{D}^{22}_{00}$$
$$ \leq 23 \cdot (2 \cdot \mathcal{D}_{11} + \mathcal{D}^3_{10} + \mathcal{D}^3_{01} + \mathcal{D}^{22}_{00}) \leq 23 \cdot \partial_4(G').$$

\textbf{Subcase 4.} $deg(x'_4) = 2$, $deg(x'_3) = 3$. This is possible only if $f'_3$ is adjacent to end faces, otherwise we use Lemma~\ref{lem5}.

\textbf{Subcase 4a.} $deg(c_1) = 7$. The face $f'_3$ is adjacent to two end faces by Lemmas~\ref{lem3} and~\ref{lem6}. We have
$$i(G) = 73 \cdot \mathcal{I}_{00} + 45 \cdot \mathcal{I}_{01} + 49 \cdot \mathcal{I}_{10} > 14 \cdot ( 5 \cdot \mathcal{I}_{00} +  2 \cdot \mathcal{I}_{01} + 2 \cdot \mathcal{I}_{10}) = 14 \cdot i(G');$$
$$\partial_4(G) \leq 15 \cdot \mathcal{D}_{11} + 15 \cdot \mathcal{D}^3_{10} + 15 \cdot \mathcal{D}^3_{01} + 15 \cdot \mathcal{D}^{22}_{00}$$ $$ < 14 \cdot (2 \cdot \mathcal{D}_{11} + \mathcal{D}^3_{10} + \mathcal{D}^3_{01} + \mathcal{D}^{22}_{00}) \leq 14 \cdot \partial_4(G').$$

\textbf{Subcase 4b.} $deg(c_1) = 5$, $deg(x'_4) = 2$, $deg(x'_3) = 2$. The face $f'_3$ is adjacent to two end faces by Lemmas~\ref{lem3} and~\ref{lem6}. We have
$$i(G) = 88 \cdot \mathcal{I}_{00} + 18 \cdot \mathcal{I}_{01} + 49 \cdot \mathcal{I}_{10} > 15 \cdot ( 5 \cdot \mathcal{I}_{00} +  2 \cdot \mathcal{I}_{01} + 2 \cdot \mathcal{I}_{10}) = 15 \cdot i(G');$$
$$\partial_4(G) \leq 18 \cdot \mathcal{D}_{11} + 6 \cdot \mathcal{D}^4_{10} + (9 \cdot \mathcal{D}^3_{01} + 9 \cdot \mathcal{D}^2_{01}) + (3 \cdot \mathcal{D}^{23}_{00} + \mathcal{D}^{13}_{00})$$ 
$$ < 15 \cdot (2 \cdot \mathcal{D}_{11} + \mathcal{D}^3_{10} + \mathcal{D}^3_{01} + \mathcal{D}^{22}_{00}) \leq 14 \cdot \partial_4(G').$$
 
\textbf{Case 2.} The face $f'_4$ corresponds to the triangle $c_2d_1d_2$.  We consider the $(G_L,G_R,c_1d_1)$-partition of $G$.

\textbf{Subcase 1.} $deg(x'_4) = 1$. Consider the graph $G_4 = G \setminus \{a_1,a_2,a_3,b_1\}$ and the $(G_L \cap V(G_4),G_R,c_2d_1)$-partition of $G_4$. We have
$$i(G) = 25 \cdot \mathcal{I}_{00} + 9 \cdot \mathcal{I}_{01} + 10 \cdot \mathcal{I}_{10} \geq \frac{34}{7} \cdot ( 5 \cdot \mathcal{I}_{00} + 2 \cdot \mathcal{I}_{01} + 2 \cdot \mathcal{I}_{10}) = \frac{34}{7} \cdot i(G_4).$$
Moreover,
$$\partial_4(G) = 7 \cdot \mathcal{D}_{11} + 4 \cdot \mathcal{D}^3_{10} +  (2 \cdot \mathcal{D}^4_{01} +   \mathcal{D}^3_{01}) + (2 \cdot \mathcal{D}^{32}_{00}+ \mathcal{D}^{22}_{00}) \leq $$ $$ \frac{34}{7} \cdot (2 \cdot \mathcal{D}_{11} +  \mathcal{D}^3_{10} +  \mathcal{D}^3_{01} +  \mathcal{D}^{22}_{00}) = \frac{34}{7} \cdot \partial_4(G_4).$$
It suffices to show that $$\frac{19}{7}\cdot \mathcal{D}_{11} + \frac{6}{7} \cdot \mathcal{D}^3_{10} + \frac{27}{7} \cdot \mathcal{D}^3_{01} \geq 2 \cdot \mathcal{D}^4_{01} + 2 \cdot \mathcal{D}^{32}_{00}.$$

By Lemma~1, we have $\mathcal{D}_{11} > \mathcal{D}^4_{01}$ and $\mathcal{D}^3_{01} > \mathcal{D}^{32}_{00}$, thus the inequality holds.

\textbf{Subcase 2.} $deg(x'_4) = 3$. We assume that $f'_4$ is adjacent to end faces $f'_3$ and $f''_3$ (it was shown in the previous case that the other configurations are not possible). Therefore,
$$i(G) = 59 \cdot \mathcal{I}_{00} + 18 \cdot \mathcal{I}_{01} + 25 \cdot \mathcal{I}_{10} \geq 34 \cdot ( \mathcal{I}_{00} +  \mathcal{I}_{01} +  \mathcal{I}_{10}) = 34 \cdot i(G_R);$$
$$\partial_4(G) = 12 \cdot \mathcal{D}_{11} + (4 \cdot \mathcal{D}^4_{10} + 3 \cdot \mathcal{D}^3_{10}) + (4 \cdot \mathcal{D}^4_{01} + 4 \cdot \mathcal{D}^4_{01}) + 2 \cdot \mathcal{D}^{33}_{00} + \mathcal{D}^{23}_{00} $$ $$ \leq 30 \cdot \mathcal{D}_{11} < 34 \cdot \partial_4(G_R).$$

In the remaining subcases we consider the induced subgraph $G''$ of $G$ with the vertex set $(V(G) \setminus V(G_L)) \cup \{b_2,c_2,d_2\}$ and the $(G'_L,G_R,c_1d_1)$-partition of $G$, where $G'_L$ is an induced subgraph of $G_L$ with the vertex set $\{b_2,c_2,d_2,c_1,d_1\}$.

\textbf{Subcase 3.} $deg(x'_4) = deg(x'_3) = 2$. As in the previous case, $f'_3$ is adjacent to some face $f'_2$ which is adjacent to end faces $f'_1$ and $f''_1$. Again, we assume that the faces $f'_3$ and $f'_4$ does not contain vertices of degree 3.

\textbf{Subcase 3a.} $deg(c_2) \geq 5$. If $deg(c_2) = 7$, then $deg(d_2) = 3$, a contradiction. Thus we assume that $deg(c_2) = 5$ and $d_2$ belongs to $f'_2$ and $f'_3$. Therefore,
$$i(G) = 116 \cdot \mathcal{I}_{00} + 63 \cdot \mathcal{I}_{01} + 45 \cdot \mathcal{I}_{10} \geq 23 \cdot (5 \cdot \mathcal{I}_{00} + 2 \cdot \mathcal{I}_{01} + 2 \cdot \mathcal{I}_{10}) = 20 \cdot i(G'');$$
$$\partial_4(G) \leq 24 \cdot \mathcal{D}_{11} + 12 \cdot (\mathcal{D}^4_{10} + \mathcal{D}^3_{10}) + 8 \cdot (\mathcal{D}^4_{01} + \mathcal{D}^3_{01})+ 12 \cdot \mathcal{D}^{32}_{00}$$
$$<  20 \cdot (2 \cdot \mathcal{D}_{11} + \mathcal{D}^3_{10} + \mathcal{D}^3_{01} + \mathcal{D}^{22}_{00}) = 20 \cdot \partial_4(G'').$$

\textbf{Subcase 3b.} $deg(c_2) = 4$. If $deg(d_2) = 3$, the we apply Lemma. We assume that $deg(d_2) = 5$. Therefore,
$$i(G) = 130 \cdot \mathcal{I}_{00} + 45 \cdot \mathcal{I}_{01} + 45 \cdot \mathcal{I}_{10} > 24 \cdot (5 \cdot \mathcal{I}_{00} + 2 \cdot \mathcal{I}_{01} + 2 \cdot \mathcal{I}_{10}) = 24 \cdot i(G'');$$
$$\partial_4(G) = 25 \cdot \mathcal{D}_{11} + (8 \cdot \mathcal{D}^4_{10} + 4 \cdot \mathcal{D}^3_{10}) + (8 \cdot \mathcal{D}^4_{01} + 4 \cdot \mathcal{D}^3_{01}) + (4 \cdot \mathcal{D}^{22}_{00} + 2 \cdot \mathcal{D}^{32}_{00} + 2 \cdot \mathcal{D}^{23}_{00} + \mathcal{D}^{22}_{00});$$
$$\partial_4(G) \leq 24 \cdot (2 \cdot \mathcal{D}_{11} + \mathcal{D}^3_{10} + \mathcal{D}^3_{01} + \mathcal{D}^{22}_{00}) = 24 \cdot \partial_4(G'').$$

\textbf{Subcase 4.} $deg(x'_4) = 2$, $deg(x'_3) = 3$. As in the previous case, it is possible only if $f'_3$ is adjacent to two end faces.

\textbf{Subcase 4a.} $deg(c_2) = 6$.  We have
$$i(G) = 77 \cdot \mathcal{I}_{00} + 45 \cdot \mathcal{I}_{01} + 35 \cdot \mathcal{I}_{10} > 15 \cdot ( 5 \cdot \mathcal{I}_{00} +  2 \cdot \mathcal{I}_{01} + 2 \cdot \mathcal{I}_{10}) = 15 \cdot i(G'');$$
$$\partial_4(G) \leq 16 \cdot \mathcal{D}_{11} + (8 \cdot \mathcal{D}^3_{10} + 3 \cdot \mathcal{D}^2_{10}) + (6 \cdot \mathcal{D}^4_{01} + 6 \cdot \mathcal{D}^3_{01} + \mathcal{D}^2_{01}) + (4 \cdot \mathcal{D}^{32}_{00} + 2 \cdot \mathcal{D}^{22}_{00} + 2 \cdot \mathcal{D}^{21}_{00} + \mathcal{D}^{11}_{00})$$ $$ \leq 16 \cdot \mathcal{D}_{11} + 11 \cdot \mathcal{D}^3_{10} + 13 \cdot \mathcal{D}^4_{01} + 9 \cdot \mathcal{D}^{32}_{00} \leq 15 \cdot (2 \cdot \mathcal{D}_{11} + \mathcal{D}^3_{10} + \mathcal{D}^3_{01} + \mathcal{D}^{22}_{00}) \leq 15 \cdot \partial_4(G'').$$

\textbf{Subcase 4b.} $deg(c_2) = 4$.  We have
$$i(G) = 98 \cdot \mathcal{I}_{00} + 18 \cdot \mathcal{I}_{01} + 35 \cdot \mathcal{I}_{10} > 16 \cdot ( 5 \cdot \mathcal{I}_{00} +  2 \cdot \mathcal{I}_{01} + 2 \cdot \mathcal{I}_{10}) = 16 \cdot i(G'');$$
$$\partial_4(G) \leq 18 \cdot \mathcal{D}_{11} + 4 \cdot \mathcal{D}^4_{10} + (6 \cdot \mathcal{D}^4_{01} + 3 \cdot \mathcal{D}^3_{01}) + 2 \cdot (\mathcal{D}^{34}_{00} + \mathcal{D}^{24}_{00}) $$
$$\leq 18 \cdot \mathcal{D}_{11} + 4 \cdot \mathcal{D}^4_{10} + 9 \cdot \mathcal{D}^4_{01} + 3 \cdot \mathcal{D}^{34}_{00}$$ $$ < 16 \cdot (2 \cdot \mathcal{D}_{11} + \mathcal{D}^3_{10} + \mathcal{D}^3_{01} + \mathcal{D}^{22}_{00}) \leq 16 \cdot \partial_4(G'').$$
\end{proof}

\begin{lemma}\label{lem8}
If $deg(x_3) = deg(x_4) = deg(x_5) = deg(x_6) = 2$, then $G$ is not critical.
\end{lemma}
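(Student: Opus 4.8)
The proof follows the same template as the proofs of Lemmas~\ref{lem5} and~\ref{lem7}: choose a MOP-partition of $G$ along an edge lying deep inside the strip of triangles $f_3, f_4, f_5, f_6$, so that the resulting left part $G_L$ is an explicit small maximal outerplanar graph; express $i(G)$ and $\partial_4(G)$ through the invariants $\mathcal{I}_{00}, \mathcal{I}_{01}, \mathcal{I}_{10}$ and $\mathcal{D}_{11}, \mathcal{D}^k_{10}, \mathcal{D}^k_{01}, \mathcal{D}^{kl}_{00}$ of $G_R$ (or of a graph $G'$ obtained from $G$ by trimming the ears $f_1, f'_1$); and produce a constant $c > 0$ with $i(G) > c \cdot i(G_R)$ and $\partial_4(G) \leq c \cdot \partial_4(G_R)$, whence $i(G) > \partial_4(G)$ by criticality of $G$ --- a contradiction.

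First I would pin down the local structure. By Lemma~\ref{lem2} no support vertex of $T(G)$ has degree $2$; by Lemmas~\ref{lem5} and~\ref{lem7} the faces $f_3, f_4, f_5$ are forced to equal $b_1b_2c_1$, $b_2c_1c_2$, $c_1c_2d_1$ with $deg(c_1) \geq 4$; and the hypothesis $deg(x_5) = deg(x_6) = 2$ means $f_6$ is a triangle sharing an edge with $f_5$ and $f_7$ a triangle sharing an edge with $f_6$. This leaves only a bounded number of combinatorial types for the strip $f_3, \ldots, f_6$ (determined by which edge of $f_5$ is glued to $f_6$ and which edge of $f_6$ is glued to $f_7$), together with a few possibilities for the degrees of $c_2$ and $d_1$ in $G$; several of these types are already excluded by Lemmas~\ref{lem2},~\ref{lem6} and~\ref{lem7} applied along the shifted strip, and I would dispose of them first.

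In each remaining sub-case the graph $G_L = G[f_6; f_7]$ is one explicit maximal outerplanar graph on the nine vertices $a_1, a_2, a_3, b_1, b_2, c_1, c_2, d_1, d_2$, so the numbers $i(G_L)$, $i(G_L, u^+)$, $i(G_L, v^+)$ (where $uv$ is the partition edge) and the coefficients with which $\mathcal{D}_{11}$, the $\mathcal{D}^k_{10}$, the $\mathcal{D}^k_{01}$ and the $\mathcal{D}^{kl}_{00}$ enter $\partial_4(G)$ are all obtained by a finite enumeration. I would then take $c$ equal to (essentially) $i(G_L)$, verify $i(G) > c \cdot i(G_R)$ using $\mathcal{I}_{00} > \max(\mathcal{I}_{01}, \mathcal{I}_{10})$ (or the analogous estimate for the trimmed graph $G'$), and verify $\partial_4(G) \leq c \cdot \partial_4(G_R)$ by bounding every term of $\partial_4(G)$ against $\mathcal{D}_{11} \leq \partial_4(G_R)$ via Statement~1 of Lemma~\ref{lem1}, invoking Statements~2 and~3 whenever the crude bound is too weak.

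The main obstacle is the familiar one for this argument: in the sub-cases where replacing $G$ by $G_R$ loses too much one must instead trim $G$ to an intermediate graph $G'$ (deleting some of $a_1, a_2, a_3, b_1$), work with a non-integer constant $c$, and estimate the resulting deficit carefully --- this is precisely where Statements~2 and~3 of Lemma~\ref{lem1}, and hence the exact degrees of $c_1$, $c_2$ and $d_1$, come into play, and where the inequality is tightest. Because the strip now has length four, however, $i(G_L)$ is comfortably large (of the order of the values appearing in the subcases of Lemma~\ref{lem7}), so the constant $c$ is large enough that the $\partial_4$-side --- always a small integer multiple of $\mathcal{D}_{11}$ --- is dominated in every case.
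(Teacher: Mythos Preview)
Your template is right, but the paper's execution is rather different from what you sketch, and in one case it needs an idea you do not mention.

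The paper does \emph{not} cut after $f_6$. It splits into three cases according to $\deg(c_1)$ and $\deg(c_2)$, and in the first two it cuts much earlier: if $\deg(c_1)\ge 5$ it takes $G_L=G[f_4;f_5]$ and the $(G_L,G_R,c_1c_2)$-partition (so $G_L$ has only six vertices), and if $\deg(c_2)\ge 5$ it takes $G_L=G[f_5;f_6]$ with the $(G_L,G_R,c_2d_1)$-partition. In both cases a vertex of degree~$3$ sits right at the cut on the $G_R$ side, which annihilates enough $\mathcal{D}$-terms that the crude Statement~1 bound $\partial_4(G)\le c\cdot\mathcal{D}_{11}$ already suffices (with $c=7$ and $c=10$). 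Your plan of always carrying the full nine-vertex strip to $f_6$ would work here too, but with larger constants and more bookkeeping.

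The real content is Case~3, $\deg(c_1)=\deg(c_2)=4$, and here your outline has a gap. The paper cuts after $f_4$ and compares $G$ with $G_3=G\setminus\{a_1,a_2,a_3\}$ using $c=\tfrac{10}{3}$, but to close both inequalities it uses two estimates that are \emph{not} instances of Lemma~\ref{lem1}. On the independent-set side it proves the \emph{reverse} inequality $\mathcal{I}_{00}\le \mathcal{I}_{01}+\mathcal{I}_{10}$, obtained by exploiting that $\deg(c_1)=\deg(c_2)=4$ pins down the neighbours $d_1,d_2$ of $c_1,c_2$ in $G_R$ and lets one rewrite each $\mathcal{I}$ as a count in $G_R\setminus\{c_1,c_2\}$. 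On the dominating-set side it proves $\mathcal{D}_{11}\le 2\mathcal{D}^3_{01}$ by an explicit injection $D\mapsto D\cup\{d_1\}$, again using the local structure of $G_R$ near the cut. Neither of these is ``Statements 2 and 3 whenever the crude bound is too weak''; they are ad~hoc facts about $G_R$ that depend on the hypothesis $\deg(x_5)=\deg(x_6)=2$, and they are exactly what makes this lemma go through. Your proposal does not anticipate needing any inequality in the direction $\mathcal{I}_{00}\le\cdots$ or $\mathcal{D}_{11}\le\cdots$, so as written it would stall in this case.
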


\begin{proof}
We denote the faces $f_2, f_3, f_4$ by $a_2b_1b_2, b_1b_2c_1, b_2c_1c_2$ respectively. By Lemma~\ref{lem6}, we have $deg(c_1) \geq 4$. There are three possible cases.  

\textbf{Case 1.} $deg(c_1) \geq 5$. In this case $deg(c_2) = 3$, the face $f_5$ contains $c_1c_2$ and $c_1$ belongs to $f_6$. Let $G_L = G[f_4;f_5]$. Consider the $(G_L,G_R,c_1c_2)$-partition of $G$.
$$i(G) = 9 \cdot \mathcal{I}_{00} + 7 \cdot \mathcal{I}_{01} + 5 \cdot \mathcal{I}_{10} \geq 7 \cdot (\mathcal{I}_{00} + \mathcal{I}_{01} + \mathcal{I}_{10}) = 7 \cdot i(G_R);$$
$$\partial_4(G) = 4 \cdot \mathcal{D}_{11} + (2 \cdot \mathcal{D}^3_{01} + \mathcal{D}^2_{01}) \leq 7 \cdot \mathcal{D}_{11} \leq 7 \cdot \partial_4(G_R).$$

In the remaining cases  we denote the faces $f_5$ and $f_6$ by $c_1c_2d_1$ and $c_2d_1d_2$ respectively.

\textbf{Case 2.} $deg(c_2) \geq 5$. In this case $deg(d_1) = 3$. Let $G_L = G[f_5;f_6]$. Consider the $(G_L,G_R,c_2d_1)$-partition of $G$.
$$i(G) = 14 \cdot \mathcal{I}_{00} + 9 \cdot \mathcal{I}_{01} + 7 \cdot \mathcal{I}_{10} > 10 \cdot (\mathcal{I}_{00} + \mathcal{I}_{01} + \mathcal{I}_{10}) = 10 \cdot i(G_R);$$
$$\partial_4(G) = \partial_4(G,c^+_2,d^+_1) + \partial_4(G,c^-_2,d^+_1) = 6 \cdot \mathcal{D}_{11} + 3 \cdot \mathcal{D}^3_{01} + \mathcal{D}^2_{01}  \leq \partial_4(G_R).$$

\textbf{Case 3.} $deg(c_1) =  deg(c_2) = 4$. Let $G_L = G[f_4;f_5]$. Consider the $(G_L,G_R,c_1c_2)$-partition of $G$ and the $(G'_L, G_R,c_1c_2)$-partition of the graph $G_3 = G \setminus \{a_1,a_2,a_3\}$, where $G'_L = G_L \setminus \{a_1,a_2,a_3\}$. 

First, we show that $\mathcal{I}_{00} \leq \mathcal{I}_{01} + \mathcal{I}_{10}$. Denote by $G'_R$ the induced subgraph of $G_R$ with the vertex set $V(G_R) \setminus \{c_1,c_2\}$. Clearly,
$$\mathcal{I}_{00} = i(G'_R), \ \mathcal{I}_{10} = i(G'_R,d^-_1), \ \mathcal{I}_{01} = i(G'_R,d^-_1,d^-_2).$$
Therefore, $\mathcal{I}_{10} > \mathcal{I}_{01}$ and $\mathcal{I}_{00} - \mathcal{I}_{10} = i(G'_R,d^+_1) \leq \mathcal{I}_{01}$. We have
$$i(G) = 9 \cdot \mathcal{I}_{00} + 7 \cdot \mathcal{I}_{01} +  5 \cdot \mathcal{I}_{10} \geq \frac{10}{3} \cdot (3 \cdot \mathcal{I}_{00} + 2 \cdot \mathcal{I}_{01} + 1 \cdot \mathcal{I}_{10}) = \frac{10}{3} \cdot i(G_1).$$
%$$i(G) = 14 \cdot \mathcal{I}_{00} + 9 \cdot \mathcal{I}_{01} +  7 \cdot \mathcal{I}_{10} \geq \frac 32 \cdot ( 9 \cdot \mathcal{I}_{00} + 6 \cdot \mathcal{I}_{01} + 5 \cdot \mathcal{I}_{10}) = \frac 32 \cdot i(G_1);$$
Moreover,
$$\partial_4(G) = 4 \cdot \mathcal{D}_{11} + 3 \cdot \mathcal{D}^2_{10} + 2 \cdot D^3_{01}, \ \partial_4(G') =   \mathcal{D}_{11} + \mathcal{D}^2_{10} + D^3_{01}. $$

It remains to show that $\partial_4(G) \leq \frac{10}{3} \cdot \partial_4(G')$ or $\mathcal{D}_{11} \leq 2 \cdot \mathcal{D}^3_{01}$. Indeed, $\mathcal{D}_{11} - \mathcal{D}^3_{01}$ equals to the number of 4-DS of $G_R$ which contain $c_1$ and $c_2$ and does not contain $d_1$ and $\mathcal{D}^3_{01}$ equals to the number of 4-DS which contain $c_1,c_2$ and $d_1$. For every 4-DS $D$ of $G_R$ such that $d_1 \notin D$, the set $D \cup \{d_1\}$ is also a 4-DS of $G_R$, thus the inequality holds.
\end{proof}

\begin{lemma}\label{lem9}
If $deg(x_3) = deg(x_4) = deg(x_5) = 2$ and $deg(x_6) = 3$, then $G$ is not critical.
\end{lemma}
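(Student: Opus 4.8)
The plan is to follow the same template as in Lemmas~\ref{lem5}--\ref{lem8}: fix the diametral path and the degree hypotheses, locate the faces near $x_6$, choose a suitable MOP-partition (or a pair of compatible partitions), express $i(G)$ and $\partial_4(G)$ as small-integer linear combinations of the quantities $\mathcal{I}_{00},\mathcal{I}_{01},\mathcal{I}_{10}$ and $\mathcal{D}_{11},\mathcal{D}^k_{10},\mathcal{D}^k_{01},\mathcal{D}^{kl}_{00}$ attached to the right part $G_R$, and then use the minimality hypothesis $i(G_R)>\partial_4(G_R)$ together with $\mathcal{I}_{00}\geq\max(\mathcal{I}_{01},\mathcal{I}_{10})$ and Lemma~\ref{lem1} to find a constant $c>0$ with $i(G)\geq c\cdot i(G_R)$ and $\partial_4(G)\leq c\cdot\partial_4(G_R)$ (or the analogous inequalities against a smaller auxiliary graph $G_j=G\setminus\{\text{a few leaves of }T(G)\}$, as in the subcases of Lemma~\ref{lem7}).

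Concretely, I would first record the forced local structure. Since $deg(x_3)=deg(x_4)=deg(x_5)=2$, the faces $f_2,\dots,f_6$ form a "fan"-type strip; writing $f_2=a_2b_1b_2$, $f_3=b_1b_2c_1$, $f_4=b_2c_1c_2$, and using Lemma~\ref{lem6} (applied at the $deg=2,deg=2$ pair further along) together with Lemmas~\ref{lem3}--\ref{lem5}, the vertex $c_1$ (and then $c_2$, etc.) has degree at least $4$ unless an earlier lemma already applies. Because $deg(x_6)=3$, the face $f_6$ has two further neighbours $f_7,f_7'$ in $T(G)$; by the earlier lemmas I expect that the branch hanging off $f_6$ via $f_7'$ can, up to the cases already dispatched, be assumed to consist only of end faces (as happened in Subcase~2 of Lemma~\ref{lem5} and Subcases~2,4 of Lemma~\ref{lem7}). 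After this case analysis on the degrees of the relevant vertices ($deg(c_1)$, $deg(c_2)$, and the branching vertex of $f_6$), each branch of $G\setminus G_R$ is a fixed small MOP, so the coefficients $i(G_L),i(G_L,c_1^+),i(G_L,\cdot^+)$ and the corresponding $\partial_4$-coefficients are concrete integers computed by inspection.

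For the independence-set side the inequality $i(G)>c\cdot i(G_R)$ (or $>c\cdot i(G_j)$) will follow, exactly as before, from $\mathcal{I}_{00}\geq\max(\mathcal{I}_{01},\mathcal{I}_{10})$ once $c$ is chosen as a weighted average of the $\mathcal{I}$-coefficients (when the partition is against $G_j$, also from the relations $\mathcal{I}_{00}=i(G'_R)$, $\mathcal{I}_{10}=i(G'_R,d_1^-)$, $\mathcal{I}_{01}=i(G'_R,d_1^-,d_2^-)$ used in Case~3 of Lemma~\ref{lem8}, giving $\mathcal{I}_{00}\leq\mathcal{I}_{01}+\mathcal{I}_{10}$ when the relevant vertices have degree exactly $4$). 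The real work, and the expected main obstacle, is the $4$-dominating side: after writing $\partial_4(G)=\partial_4(G,\cdot^+,\cdot^+)+\partial_4(G,\cdot^+,\cdot^-)+\partial_4(G,\cdot^-,\cdot^+)+\partial_4(G,\cdot^-,\cdot^-)$ along the partition edge and expanding each term, one is left with an inequality of the shape $\sum \alpha_{kl}\mathcal{D}^{(\cdot)}_{(\cdot)}\leq c\cdot\sum\beta_{kl}\mathcal{D}^{(\cdot)}_{(\cdot)}$ that must be verified using only the monotonicity relations and the bounds $\mathcal{D}_{11}\geq\max(\mathcal{D}^k_{10},\mathcal{D}^k_{01})$, $\min(\mathcal{D}^k_{10},\mathcal{D}^k_{01})\geq\mathcal{D}^{kl}_{00}$, $2\mathcal{D}^3_{10}\geq\mathcal{D}^4_{10}$, $2\mathcal{D}^3_{01}\geq\mathcal{D}^4_{01}$, $3\mathcal{D}^2_{01}\geq\mathcal{D}^3_{01}$ (or $2\mathcal{D}^2_{01}\geq\mathcal{D}^3_{01}$) from Lemma~\ref{lem1}, plus the vanishing facts ($\mathcal{D}^{kl}_{00}=0$ or $\mathcal{D}^2_{01}=0$ when the pertinent degrees are small, as noted in Section~2.2). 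The delicate subcases will be those where $\min(deg(c_1),deg(c_2))=4$ and some $\mathcal{D}^{kl}_{00}$-terms do not vanish; there, as in Subcases~1a/1b of Lemma~\ref{lem7}, I would split further on which of $deg(c_1),deg(c_2)$ equals $4$ versus $\geq5$, using $\min(\mathcal{D}^2_{10},\mathcal{D}^2_{01})=0$ in the tight case and a stronger $2\mathcal{D}^2_{01}\geq\mathcal{D}^3_{01}$ bound in the looser one, so that in every branch $\partial_4(G)\leq c\cdot\partial_4(G_R)<c\cdot i(G_R)\leq i(G)$, contradicting criticality.
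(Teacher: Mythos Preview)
Your overall template is exactly what the paper does: partition at an edge near $x_6$, expand $i(G)$ and $\partial_4(G)$ in the quantities $\mathcal{I}_{**}$ and $\mathcal{D}^{**}_{**}$ attached to $G_R$, and compare with a smaller auxiliary graph using Lemma~\ref{lem1}. Two points in your plan, however, do not survive contact with the actual argument.

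First, your expectation that ``the branch hanging off $f_6$ via $f'_5$ can, up to the cases already dispatched, be assumed to consist only of end faces'' is wrong. The earlier lemmas only constrain the structure near a \emph{diametral endpoint}; the side branch at $x_6$ can reach a leaf at distance up to $5$ (since the path $x_1\ldots x_6$ has length $5$), and nothing in Lemmas~\ref{lem2}--\ref{lem8} collapses it. The paper accordingly runs a genuine five-level case split on $\deg(x'_5),\deg(x'_4),\deg(x'_3)$: Subcase~1 ($x'_5$ a leaf), Subcase~2 ($\deg(x'_5)=3$ with two end faces), Subcase~3 ($\deg(x'_5)=2$, $f'_4$ adjacent to end faces), Subcase~4 ($\deg(x'_5)=\deg(x'_4)=2$, $f'_3$ adjacent to end faces), Subcase~5 ($\deg(x'_5)=\deg(x'_4)=\deg(x'_3)=2$, $f'_2$ adjacent to end faces), each further split into (a)/(b) according to which vertex of $f'_5$ is shared with the deeper faces. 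Your proposed case analysis on $\deg(c_1),\deg(c_2)$ and ``the branching vertex of $f_6$'' is not fine enough to pin down $G_L$ as a fixed small MOP.

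Second, the comparison graph is not $G_R$ in most subcases. The paper fixes a specific five-vertex $G'_L$ (either $\{c_1,c_2,c_3,d_1,d_2\}$ or $\{c_1,c_2,d_1,d_2,d_3\}$, depending on which edge $f'_5$ shares with $f_6$) and compares $i(G),\partial_4(G)$ against $i(G'),\partial_4(G')$ for $G'=G'_L\cup G_R$, using throughout the concrete identity $\partial_4(G')=2\mathcal{D}_{11}+\mathcal{D}^3_{10}+\mathcal{D}^3_{01}+\mathcal{D}^{22}_{00}$. Choosing this $G'$ is what makes the $\mathcal{D}$-inequalities close in every subcase; a direct comparison with $G_R$ would not. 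Until you identify this auxiliary graph and actually carry out the ten or so coefficient computations, the plan remains a sketch rather than a proof.
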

\begin{proof}
Denote the faces $f_5$ and $f_6$ by $c_1c_2d_1$ and $c_2d_1d_2$ respectively. Let $f'_5$ be the face adjacent to $f_6$ other than $f_5$ and $f_7$. By Lemma~\ref{lem8}, $deg(c_1) = 4$ and $deg(c_2) \geq 5$. Let $G_L = G[f_4;f_5]$.  We assume that $G_L$ contains at most one face of degree 3 except $f_2$, which is adjacent to two end faces. By previous lemmas, $G_L$ contains no vertices of degree 3. There are two possible cases depending on the location of $f'_5$ in $G$.

\textbf{Case 1.} $f'_5$ contains the edge $c_2d_2$. Denote by $c_3$ the third vertex of $f'_5$. In each of the following subcases we consider the partition $(G_L,G_R,d_1d_2)$ of the graph $G$ and the partition $(G'_L,G_R,d_1d_2)$ of the graph $G'$, where $G'$ is a spanning subgraph of $G$ with the vertex set $(V(G) \setminus V(G_L)) \cup \{c_1,c_2,c_3,d_1,d_2\}$. Clearly,
$$\partial_4(G') = 2 \cdot \mathcal{D}_{11} + \mathcal{D}^3_{10} + \mathcal{D}^3_{01} + \mathcal{D}^{22}_{00}.$$ It is not hard to check using Lemma 1, that in all subcases below we have $i(G) > \partial_4(G)$, therefore $G$ is not critical.

\textbf{Subcase 1.} $deg(x'_5) = 1$.
$$i(G) = 35 \cdot \mathcal{I}_{00} + 14 \cdot \mathcal{I}_{01} + 18 \cdot \mathcal{I}_{10} > 7 \cdot (5 \cdot \mathcal{I}_{00} +  2 \cdot \mathcal{I}_{01} +  2 \cdot \mathcal{I}_{10}) = 7 \cdot i(G');$$
$$\partial_4(G) = 10 \cdot \mathcal{D}_{11} + (6 \cdot \mathcal{D}^3_{10} + 3 \cdot \mathcal{D}^2_{10}) + (4 \cdot \mathcal{D}^3_{01} + 4 \cdot \mathcal{D}^2_{01}) + 4 \cdot \mathcal{D}^{22}_{00}.$$
%$$\partial_4(G') = 2 \cdot \mathcal{D}_{11} + \mathcal{D}^3_{10} + \mathcal{D}^3_{01} + \mathcal{D}^{22}_{00}.$$

\textbf{Subcase 2.} $deg(x'_5) = 3$ and $f'_5$ is adjacent to some faces $f'_4$ and $f''_4$. By the previous lemmas, both $f'_4$ and $f''_4$ are end faces. Therefore,
$$i(G) = 70 \cdot \mathcal{I}_{00} + 28 \cdot \mathcal{I}_{01} + 45 \cdot \mathcal{I}_{10} > 14 \cdot (5 \cdot \mathcal{I}_{00} +  2 \cdot \mathcal{I}_{01} +  2 \cdot \mathcal{I}_{10}) = 7 \cdot i(G');$$
$$\partial_4(G) = 22 \cdot \mathcal{D}_{11} + (6 \cdot \mathcal{D}^4_{10} + 4 \cdot \mathcal{D}^3_{10}) + (8 \cdot \mathcal{D}^3_{01} + 4 \cdot \mathcal{D}^2_{01}) + (4 \cdot \mathcal{D}^{23}_{00} + 4 \cdot \mathcal{D}^{12}_{00}).$$
%$$\partial_4(G') = 2 \cdot \mathcal{D}_{11} + \mathcal{D}^3_{10} + \mathcal{D}^3_{01} + \mathcal{D}^{22}_{00}.$$

\textbf{Subcase 3.} $deg(x'_5) = 2$ and $f'_4$ is adjacent to end faces $f'_3$ and $f''_3$. Two configurations are possible:

\textbf{Subcase 3a.} $deg(c_2) = 7$ and $c_2$ belong to $f'_4$.
$$i(G) = 98 \cdot \mathcal{I}_{00} + 70 \cdot \mathcal{I}_{01} + 63 \cdot \mathcal{I}_{10} > 19 \cdot (5 \cdot \mathcal{I}_{00} +  2 \cdot \mathcal{I}_{01} +  2 \cdot \mathcal{I}_{10}) = 19 \cdot i(G');$$
$$\partial_4(G) \leq 22 \cdot (\mathcal{D}_{11} + \mathcal{D}^3_{10} + \mathcal{D}^3_{01} + \mathcal{D}^{22}_{00}) \leq 19 \cdot (2 \cdot \mathcal{D}_{11} + \mathcal{D}^3_{10} + \mathcal{D}^3_{01} + \mathcal{D}^{22}_{00}).$$

\textbf{Subcase 3b.} $deg(c_2) = 5$ and $c_2$ does not belong to $f'_4$.
$$i(G) = 98 \cdot \mathcal{I}_{00} + 28 \cdot \mathcal{I}_{01} + 63 \cdot \mathcal{I}_{10} > 18 \cdot (5 \cdot \mathcal{I}_{00} +  2 \cdot \mathcal{I}_{01} +  2 \cdot \mathcal{I}_{10}) = 7 \cdot i(G');$$
$$\partial_4(G) = 26 \cdot \mathcal{D}_{11} + 10 \cdot \mathcal{D}^4_{10} + (12 \cdot \mathcal{D}^3_{01} + 8 \cdot \mathcal{D}^2_{01}) + 4 \cdot \mathcal{D}^{24}_{00};$$
%$$\partial_4(G') = 2 \cdot \mathcal{D}_{11} + \mathcal{D}^3_{10} + \mathcal{D}^3_{01} + \mathcal{D}^{22}_{00}.$$

\textbf{Subcase 4.} $deg(x'_5) = deg(x'_4) = 2$ and $f'_3$ is adjacent to end faces $f'_2$ and $f''_2$. Since $G_L$ has no vertices of degree 3 in $G$, only two configurations are possible:

\textbf{Subcase 4a.} $deg(c_2) = 6$.
$$i(G) = 161 \cdot \mathcal{I}_{00} + 98 \cdot \mathcal{I}_{01} + 81 \cdot \mathcal{I}_{10} > 32 \cdot (5 \cdot \mathcal{I}_{00} +  2 \cdot \mathcal{I}_{01} +  2 \cdot \mathcal{I}_{10}) = 32 \cdot i(G');$$
$$\partial_4(G) \leq 36 \cdot (\mathcal{D}_{11} + \mathcal{D}^3_{10} + \mathcal{D}^3_{01} + \mathcal{D}^{22}_{00}) \leq 36 \cdot (2 \cdot \mathcal{D}_{11} + \mathcal{D}^3_{10} + \mathcal{D}^3_{01} + \mathcal{D}^{22}_{00}).$$

\textbf{Subcase 4b.} $deg(c_2) = 5$.
$$i(G) = 175 \cdot \mathcal{I}_{00} + 70 \cdot \mathcal{I}_{01} + 81 \cdot \mathcal{I}_{10} > 35 \cdot (5 \cdot \mathcal{I}_{00} +  2 \cdot \mathcal{I}_{01} +  2 \cdot \mathcal{I}_{10}) = 35 \cdot i(G');$$
$$\partial_4(G) = 39 \cdot \mathcal{D}_{11} + (12 \cdot \mathcal{D}^4_{10} + 14 \cdot \mathcal{D}^3_{10}) + (16 \cdot \mathcal{D}^3_{01} + 16 \cdot \mathcal{D}^2_{01}) + 8 \cdot \mathcal{D}^{33}_{00} + 4 \cdot \mathcal{D}^{23}_{00};$$
$$\partial_4(G') = 2 \cdot \mathcal{D}_{11} + \mathcal{D}^3_{10} + \mathcal{D}^3_{01} + \mathcal{D}^{22}_{00}.$$

\textbf{Subcase 5.} $deg(x'_5) = deg(x'_4) =  deg(x'_3) = 2$ and $f'_2$ is adjacent to end faces $f'_1$ and $f''_1$. Again, since $G_L$ has no vertices of degree 3 in $G$, only two configurations are possible:

\textbf{Subcase 5a.} $deg(c_2) = 6$.
$$i(G) = 245 \cdot \mathcal{I}_{00} + 126 \cdot \mathcal{I}_{01} + 126 \cdot \mathcal{I}_{10} > 49 \cdot (5 \cdot \mathcal{I}_{00} +  2 \cdot \mathcal{I}_{01} +  2 \cdot \mathcal{I}_{10}) = 49 \cdot i(G');$$
$$\partial_4(G) \leq 52 \cdot (\mathcal{D}_{11} + \mathcal{D}^3_{10} + \mathcal{D}^3_{01} + \mathcal{D}^{22}_{00}) \leq 52 \cdot (2 \cdot \mathcal{D}_{11} + \mathcal{D}^3_{10} + \mathcal{D}^3_{01} + \mathcal{D}^{22}_{00}).$$

\textbf{Subcase 5b.} $deg(c_2) = 5$.
$$i(G) = 259 \cdot \mathcal{I}_{00} + 98 \cdot \mathcal{I}_{01} + 126 \cdot \mathcal{I}_{10} > 51 \cdot (5 \cdot \mathcal{I}_{00} +  2 \cdot \mathcal{I}_{01} +  2 \cdot \mathcal{I}_{10}) = 52 \cdot i(G');$$
$$\partial_4(G) = 52 \cdot \mathcal{D}_{11} + (18 \cdot \mathcal{D}^4_{10} + 18 \cdot \mathcal{D}^3_{10} + 3 \cdot \mathcal{D}^2_{10}) + (24 \cdot \mathcal{D}^3_{01} + 16 \cdot \mathcal{D}^2_{01}) + 12 \cdot \mathcal{D}^{23}_{00} + 4 \cdot \mathcal{D}^{22}_{00}.$$
%$$\partial_4(G') = 2 \cdot \mathcal{D}_{11} + \mathcal{D}^3_{10} + \mathcal{D}^3_{01} + \mathcal{D}^{22}_{00}.$$

\textbf{Case 2.} $f'_5$ contains the edge $d_1d_2$. Denote by $d_3$ the third vertex of $f'_5$. In each of the following subcases we consider the partition $(G_L,G_R,d_1d_2)$ of the graph $G$ and the partition $(G'_L,G_R,d_1d_2)$ of the graph $G'$, where $G'$ is a spanning subgraph of $G$ with the vertex set $(V(G) \setminus V(G_L)) \cup \{c_1,c_2,d_1,d_2,d_3\}$. Clearly, we have 
$$\partial_4(G') = 2 \cdot \mathcal{D}_{11} + \mathcal{D}^3_{10} + \mathcal{D}^3_{01} + \mathcal{D}^{22}_{00}.$$

It is not hard to check using Lemma 1, that in all  subcases below we have $i(G) > \partial_4(G)$.

\textbf{Subcase 1.} $deg(x'_5) = 1$.
$$i(G) = 37 \cdot \mathcal{I}_{00} + 14 \cdot \mathcal{I}_{01} + 14 \cdot \mathcal{I}_{10} > 7 \cdot (5 \cdot \mathcal{I}_{00} +  2 \cdot \mathcal{I}_{01} +  2 \cdot \mathcal{I}_{10}) = 7 \cdot i(G');$$
$$\partial_4(G) = 10 \cdot \mathcal{D}_{11} + 6 \cdot \mathcal{D}^3_{10} + (3 \cdot \mathcal{D}^4_{01} + \mathcal{D}^3_{01}) + (3 \cdot \mathcal{D}^{32}_{00} + \mathcal{D}^{22}_{00}).$$

\textbf{Subcase 2.} $deg(x'_5) = 3$, $f'_5$ is adjacent to end faces $f'_4$ and $f''_4$.
$$i(G) = 70 \cdot \mathcal{I}_{00} + 28 \cdot \mathcal{I}_{01} + 45 \cdot \mathcal{I}_{10} > 14 \cdot (5 \cdot \mathcal{I}_{00} +  2 \cdot \mathcal{I}_{01} +  2 \cdot \mathcal{I}_{10}) = 7 \cdot i(G');$$
$$\partial_4(G) = 16 \cdot \mathcal{D}_{11} + (6 \cdot \mathcal{D}^4_{10} + 4 \cdot \mathcal{D}^3_{10}) + (6 \cdot \mathcal{D}^4_{01} + 5 \cdot \mathcal{D}^3_{01} + \mathcal{D}^2_{01}) + (3 \cdot \mathcal{D}^{23}_{00} +  \mathcal{D}^{12}_{00}).$$
%$$\partial_4(G') = 2 \cdot \mathcal{D}_{11} + \mathcal{D}^3_{10} + \mathcal{D}^3_{01} + \mathcal{D}^{22}_{00}.$$

\textbf{Subcase 3.} $deg(x'_5) = 2$ and $f'_4$ is adjacent to end faces $f'_3$ and $f''_3$. Two configurations are possible:

\textbf{Subcase 3a.} $deg(d_1) = 6$, $d_1$ belongs to $f'_4$.
$$i(G) = 116 \cdot \mathcal{I}_{00} + 70 \cdot \mathcal{I}_{01} + 49 \cdot \mathcal{I}_{10} > 23 \cdot (5 \cdot \mathcal{I}_{00} +  2 \cdot \mathcal{I}_{01} +  2 \cdot \mathcal{I}_{10}) = 23 \cdot i(G');$$
$$\partial_4(G) = 22 \cdot \mathcal{D}_{11} + 12 \cdot \mathcal{D}^3_{10} + 4 \cdot \mathcal{D}^3_{10} + (9 \cdot \mathcal{D}^4_{01} + 6 \cdot \mathcal{D}^3_{01} + \mathcal{D}^2_{01}) + 4 \cdot (6 \cdot \mathcal{D}^{32}_{00} + 2 \cdot \mathcal{D}^{22}_{00}).$$
%$$\partial_4(G') = 2 \cdot \mathcal{D}_{11} + \mathcal{D}^3_{10} + \mathcal{D}^3_{01} + \mathcal{D}^{22}_{00}.$$

%$$\partial_4(G') = 2 \cdot \mathcal{D}_{11} + \mathcal{D}^3_{10} + \mathcal{D}^3_{01} + \mathcal{D}^{22}_{00}.$$

\textbf{Subcase 3b.} $deg(d_1) = 4$, $d_1$ does not belong to $f'_4$.
$$i(G) = 98 \cdot \mathcal{I}_{00} + 28 \cdot \mathcal{I}_{01} + 49 \cdot \mathcal{I}_{10} > 18 \cdot (5 \cdot \mathcal{I}_{00} +  2 \cdot \mathcal{I}_{01} +  2 \cdot \mathcal{I}_{10}) = 7 \cdot i(G');$$
$$\partial_4(G) = 26 \cdot \mathcal{D}_{11} + 6 \cdot \mathcal{D}^4_{10} + (3 \cdot \mathcal{D}^3_{01} + 2 \cdot \mathcal{D}^2_{01}) + 3 \cdot \mathcal{D}^{34}_{00} + 2 \cdot \mathcal{D}^{24}_{00}.$$
%$$\partial_4(G') = 2 \cdot \mathcal{D}_{11} + \mathcal{D}^3_{10} + \mathcal{D}^3_{01} + \mathcal{D}^{22}_{00}.$$

\textbf{Subcase 4.} $deg(x'_5) = deg(x'_4) =  2$ and $f'_3$ is adjacent to two end faces $f'_2$ and $f''_2$.

\textbf{Subcase 4a.} $deg(d_1) = 5$.
$$i(G) = 171 \cdot \mathcal{I}_{00} + 98 \cdot \mathcal{I}_{01} + 63 \cdot \mathcal{I}_{10} > 33 \cdot (5 \cdot \mathcal{I}_{00} +  2 \cdot \mathcal{I}_{01} +  2 \cdot \mathcal{I}_{10}) = 25 \cdot i(G');$$
$$\partial_4(G) = 36 \cdot \mathcal{D}_{11} + (18 \cdot \mathcal{D}^3_{10} + 14 \cdot \mathcal{D}^2_{10}) + (12 \cdot \mathcal{D}^4_{01} + 10 \cdot \mathcal{D}^3_{01} + 2 \cdot \mathcal{D}^2_{01}) + 9 \cdot \mathcal{D}^{32}_{00} + 3 \cdot \mathcal{D}^{22}_{00}.$$
%$$\partial_4(G') = 2 \cdot \mathcal{D}_{11} + \mathcal{D}^3_{10} + \mathcal{D}^3_{01} + \mathcal{D}^{22}_{00}.$$

\textbf{Subcase 4b.} $deg(d_1) = 4$.
$$i(G) = 189 \cdot \mathcal{I}_{00} + 70 \cdot \mathcal{I}_{01} + 63 \cdot \mathcal{I}_{10} > 35 \cdot (5 \cdot \mathcal{I}_{00} +  2 \cdot \mathcal{I}_{01} +  2 \cdot \mathcal{I}_{10}) = 35 \cdot i(G');$$
$$\partial_4(G) = 36 \cdot \mathcal{D}_{11} + (12 \cdot \mathcal{D}^4_{10} + 6 \cdot \mathcal{D}^3_{10}) + (12 \cdot \mathcal{D}^4_{01} + 4 \cdot \mathcal{D}^3_{01}) + 12 \cdot \mathcal{D}^{33}_{00}.$$
%$$\partial_4(G') = 2 \cdot \mathcal{D}_{11} + \mathcal{D}^3_{10} + \mathcal{D}^3_{01} + \mathcal{D}^{22}_{00}.$$

\textbf{Subcase 5.} $deg(x'_5) = deg(x'_4) =  deg(x'_3) = 2$ and $x'_2$ is adjacent to two end faces $x'_1$ and $x''_1$.

\textbf{Subcase 5a.} $deg(d_1) = 5$.
$$i(G) = 259 \cdot \mathcal{I}_{00} + 126 \cdot \mathcal{I}_{01} + 98 \cdot \mathcal{I}_{10} > 51 \cdot (5 \cdot \mathcal{I}_{00} +  2 \cdot \mathcal{I}_{01} +  2 \cdot \mathcal{I}_{10}) = 51 \cdot i(G');$$
$$\partial_4(G) = 52 \cdot \mathcal{D}_{11} + (24 \cdot \mathcal{D}^3_{10} + 16 \cdot \mathcal{D}^2_{10}) + (18 \cdot \mathcal{D}^4_{01} + 5 \cdot \mathcal{D}^3_{01} + 4 \cdot \mathcal{D}^2_{01}) + 12 \cdot \mathcal{D}^{32}_{00} + 4 \cdot \mathcal{D}^{22}_{00}.$$
%$$\partial_4(G') = 2 \cdot \mathcal{D}_{11} + \mathcal{D}^3_{10} + \mathcal{D}^3_{01} + \mathcal{D}^{22}_{00}.$$

\textbf{Subcase 5b.} $deg(d_1) = 4$.
$$i(G) = 277 \cdot \mathcal{I}_{00} + 98 \cdot \mathcal{I}_{01} + 98 \cdot \mathcal{I}_{10} > 52 \cdot (5 \cdot \mathcal{I}_{00} +  2 \cdot \mathcal{I}_{01} +  2 \cdot \mathcal{I}_{10}) = 52 \cdot i(G');$$
$$\partial_4(G) = 52 \cdot \mathcal{D}_{11} + (18 \cdot \mathcal{D}^4_{10} + 6 \cdot \mathcal{D}^3_{10}) + (18 \cdot \mathcal{D}^4_{01} + 6 \cdot \mathcal{D}^3_{01}) + 16 \cdot \mathcal{D}^{33}_{00}.$$
%$$\partial_4(G') = 2 \cdot \mathcal{D}_{11} + \mathcal{D}^3_{10} + \mathcal{D}^3_{01} + \mathcal{D}^{22}_{00}.$$

% определить x'_4, x'_3, x'_2
% аккуратно объяснить, почему перебор именно такой
% что-то нарисовать?
\end{proof}

Lemmas~\ref{lem2}--\ref{lem9} imply the main result of this paper.
\setcounter{theorem}{0}
\begin{theorem}\label{thm1}
For every outerplanar graph $G$ we have $i(G) > \partial_4(G)$.
\end{theorem}

\section{Concluding remarks}

It seems that the following generalization of Theorem 1 is true.

\begin{conjecture}
For every graph $G$ with the average vertex degree at most $k \geq 1$ the inequality $i(G) \geq \partial_k(G)$ holds. Moreover, equality occurs if and only if $G$ is $k$-regular.
\end{conjecture}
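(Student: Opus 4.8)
The plan is to prove the conjecture for forests, which --- since $i$ and $\partial_k$ are multiplicative over connected components --- reduces to the case of a tree $T$. A tree on $n\ge 2$ vertices has average degree $2(n-1)/n<2$, so for $k\ge 2$ the hypothesis is automatic, while for $k=1$ it forces $T\in\{K_1,K_2\}$; these two are checked by hand, $K_2$ (the only $k$-regular tree, obtained for $k=1$) giving the sole equality $i(K_2)=3=\partial_1(K_2)$. Thus fix $k\ge 2$; we must show $i(T)>\partial_k(T)$ for every tree $T$, and I would do this exactly as in Section~3: assume a \emph{critical} tree $T$ (so $i(T)\le\partial_k(T)$ but $i(T')>\partial_k(T')$ for every smaller tree $T'$) and derive a contradiction. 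The structural fact replacing the $\mathcal{MOP}$ geometry is elementary: for $k\ge 2$ every vertex of degree $<k$ --- in particular every leaf --- lies in \emph{every} $k$-dominating set of $T$.

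Next I would take a longest path of $T$ and let $v$ be the support vertex at one of its ends; unless $T$ is a star (an easy separate check) $v$ has exactly one non-leaf neighbour $w$ and leaf-neighbours $u_1,\dots,u_s$ with $s\ge 1$. Setting $A=T\setminus\{v,u_1,\dots,u_s\}$ and $B=A\setminus w$, deletion of $v$ isolates the $u_i$, so
\[ i(T)=i(T\setminus v)+i(T\setminus N[v])=2^s\,i(A)+i(B). \]
On the other side every $k$-DS of $T$ has the form $\{u_1,\dots,u_s\}\cup D_0$ with $D_0\subseteq V(A)\cup\{v\}$, and (as only $v$ meets the $u_i$) the constraints on $D_0$ are that every $x\in V(A)\setminus D_0$ have at least $k$ neighbours of $D_0$ in $T$, and that $s+[\,w\in D_0\,]\ge k$ whenever $v\notin D_0$. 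When $s\ge k$ the last clause is vacuous and a short count gives $\partial_k(T)=\partial_k(A)+\partial_k^{+1,w}(A)$, where $\partial_k^{+1,w}(A)$ is the number of subsets of $V(A)$ that $k$-dominate $A$ except possibly at $w$, where a deficit of one is allowed; the injection ``adjoin $w$'' gives $\partial_k^{+1,w}(A)\le 2\,\partial_k(A)$, whence $\partial_k(T)\le 3\,\partial_k(A)<3\,i(A)\le 2^s\,i(A)<i(T)$, a contradiction. Hence a critical tree satisfies $s<k$ at the support vertex terminating every longest path.

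The remaining work --- and the real obstacle --- is to dispose of the configurations with $s<k$ (and, when $k=2$, of the degree-$2$ support vertex), in direct analogy with Lemmas~\ref{lem2}--\ref{lem9}: peel off a slightly larger neighbourhood of the end of a longest path, write $i(T)$ and $i(T_0)$ for a suitable proper subtree $T_0$ as nonnegative combinations of $\mathcal I$-type quantities, and bound $\partial_k(T)$ in terms of $\partial_k(T_0)$ via a $k$-domination analogue of Lemma~\ref{lem1} --- the inequalities $\mathcal D_{11}\ge\max(\mathcal D^{k}_{10},\mathcal D^{k}_{01})\ge\mathcal D^{kl}_{00}$ and the ``doubling'' bounds carry over verbatim to the small trees that occur. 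The difficulty is purely organisational: one enumerates how $\deg v$, $\deg w$ and the next few path-degrees compare with $k$, exhibiting in each case a constant $c$ with $i(T)\ge c\cdot i(T_0)$ and $\partial_k(T)\le c\cdot\partial_k(T_0)$; I expect far fewer cases than in Section~3, trees branching much less freely than MOPs. Finally, the equality clause falls out of the same bookkeeping: $i(T)=\partial_k(T)$ would force equality at every step of the reduction, impossible once an $s\ge k$ reduction is available or a vertex of degree $\neq k$ is encountered, so unwinding the reduction forces $T$ to be $k$-regular, i.e.\ $k=1$ and $T=K_2$.
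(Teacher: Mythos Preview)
The statement you are attempting to prove is not proved in the paper; it is explicitly presented as an open \emph{conjecture}, with the remark that the authors ``are unable to prove this statement even for $k=4$''. The only result the paper establishes in this direction is Theorem~\ref{thm2}, the special case $k=2$ for trees, via a one-paragraph induction that does not split on the number of leaves at the terminal support vertex.

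Your proposal does not address the conjecture as stated either: you restrict at the outset to forests, whereas the conjecture concerns arbitrary graphs of average degree at most $k$. So even a fully completed version of your argument would yield only a strict special case, not the conjecture.

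Within that special case your argument is also incomplete. The reduction when the terminal support vertex has $s\ge k$ leaves is correct and clean, but you yourself flag $s<k$ as ``the real obstacle'' and then do not carry it out: you describe a Section~3-style programme in outline without exhibiting a single configuration, a single constant $c$, or a tree analogue of Lemma~\ref{lem1} adapted to general $k$. For $k=2$ the sole missing case is $s=1$, and there the paper's proof of Theorem~\ref{thm2} handles it in one line by comparing $T$ with two proper subtrees $T_2,T_3$, with no $\mathcal D$-bookkeeping. For general $k$, however, the support vertex may carry $1,\dots,k-1$ leaves, its non-leaf neighbour $w$ may have any degree relative to $k$, and the branching at the next vertices along the path multiplies the possibilities; the claim that there will be ``far fewer cases than in Section~3'' is not justified, and there is no argument that the case analysis terminates uniformly in $k$. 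As it stands, the proposal proves nothing beyond what the paper already has for $k=2$, and does not touch the conjecture for general graphs at all.
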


Although we are unable to prove this statement even for $k = 4$, it is easy to obtain a similar result for the class of trees.

\begin{theorem}\label{thm2}
For every tree $T$ we have $i(T) > \partial_2(T)$.
\end{theorem}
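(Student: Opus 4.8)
The plan is to prove Theorem~\ref{thm2} by induction on the number of vertices of $T$, mimicking the "critical graph" strategy used for Theorem~\ref{thm1}: suppose $T$ is a smallest tree with $i(T) \leq \partial_2(T)$, take a diametral path, and use a leaf near its end to peel off a small piece, reducing to a strictly smaller tree for which the strict inequality is known. Small base cases (trees of diameter at most $2$, i.e. stars and single edges) are checked directly; note for a star $K_{1,n}$ with $n \geq 2$ we have $i = 2^n + 1$ while every $2$-dominating set must contain all $n$ leaves (each leaf has only one neighbour, so the centre's $2$ neighbours cannot be supplied to a leaf not in $D$), giving $\partial_2 = 2$, so the inequality is comfortably strict; the edge $K_2$ gives $i = 3 > \partial_2 = 1$.

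First I would fix a diametral path $v_1 v_2 \dots v_m$ with $m \geq 4$, so $v_1$ is a leaf and $v_2$ is a support vertex. The key structural observation, analogous to Lemma~\ref{lem2}, is that a leaf $u$ behaves very restrictively for $2$-domination: since $\deg(u) = 1$, if $u \notin D$ then $u$'s unique neighbour would have to supply $2$ dominators, which is impossible, so \emph{every leaf lies in every $2$-dominating set}. Hence if $v_2$ has $\ell$ leaf-children $u_1,\dots,u_\ell$ (one of which is $v_1$), all of them are forced into $D$. I would split on whether $\ell \geq 2$ or $\ell = 1$. If $\ell \geq 2$: deleting one leaf child, say $u_\ell$, gives $T' = T \setminus u_\ell$ with $i(T) = i(T') + i(T \setminus N[u_\ell])$ where $N[u_\ell] = \{u_\ell, v_2\}$, i.e. $i(T) = i(T') + i(T \setminus \{u_\ell, v_2\})$; meanwhile $\partial_2(T) \le \partial_2(T') $ is \emph{not} quite automatic, so instead one observes that since $u_\ell$ is forced into every $2$-DS of $T$ and also another leaf child $u_1$ is forced in, the map $D \mapsto D \setminus u_\ell$ is a bijection onto the $2$-DS of $T'$ (the remaining forced leaf $u_1 \in D\setminus u_\ell$ already guarantees $v_2$ is dominated-or-in-$D$ exactly as before), giving $\partial_2(T) = \partial_2(T')$ and hence $i(T) > i(T') \geq \partial_2(T') = \partial_2(T)$ by induction. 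If $\ell = 1$, then $v_2$ has exactly one leaf child $v_1$ and its other neighbours (besides $v_3$) are non-leaves; I would instead delete the pendant edge region $\{v_1, v_2\}$ and its effect, decomposing $i(T)$ and $\partial_2(T)$ over the cases $v_1 \in D$, $v_2 \in D$ or not, using that $v_1$ is always in $D$ to collapse several terms; here one compares $T$ with $T \setminus \{v_1\}$ and $T \setminus \{v_1,v_2\}$ and shows $\partial_2(T) - \partial_2(T\setminus v_1) \le \partial_2(T \setminus \{v_1,v_2,\text{one more vertex}\})$ via an injection of the "newly undominated" sets, exactly in the spirit of the proof of Lemma~\ref{lem2}.

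The main obstacle is handling the $\ell = 1$ case cleanly, because then removing the leaf $v_1$ can destroy $2$-domination of $v_2$ (if $v_2 \notin D$ it relied on $v_1$ plus one more neighbour, and after deletion $v_2$ may have too few dominators left in $T \setminus v_1$), so $\partial_2(T) > \partial_2(T \setminus v_1)$ is possible and one must account for the surplus. I would quantify it: the surplus equals the number of $2$-DS $D$ of $T$ with $v_2 \notin D$ and $v_1 \in D$ and exactly one neighbour of $v_2$ other than $v_1$ in $D$; each such $D$ can be injected into a $2$-DS of a smaller tree obtained by contracting/removing $\{v_1, v_2\}$ and adjusting at $v_3$ — concretely $F(D) = (D \cup \{v_3\}) \setminus \{v_1, v_2\}$ is a $2$-DS of $T \setminus \{v_1, v_2\}$, injective since the sets differ away from $\{v_1,v_2,v_3\}$ — yielding $\partial_2(T) \le \partial_2(T\setminus v_1) + \partial_2(T \setminus \{v_1,v_2\})$, which matches the two-term split of $i(T)$ and closes the induction.

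Finally I would note that this tree argument is strictly easier than the MOP case because the forced-membership of leaves makes $2$-domination extremely rigid, so no weighted ratios $c$ are needed; a short remark that Theorem~\ref{thm2} is the $k = 2$ instance of the Conjecture (trees have average degree $< 2$) and that the strictness is consistent with $K_2$ being the only $2$-regular... — actually no connected tree is $2$-regular, so equality never occurs, in agreement with the Conjecture's claim that equality needs $k$-regularity.
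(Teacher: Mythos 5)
Your overall route (minimal counterexample, diametral path, peel off a leaf at the support vertex $v_2$, split $i(T)$ as $i(T\setminus v_1)+i(T\setminus\{v_1,v_2\})$, and bound $\partial_2(T)$ by the two corresponding terms via an injection) is sound and close in spirit to the paper, which likewise works at the second and third vertices of a diametral path, splitting $\partial_2(T)$ according to whether $x_2$ lies in the $2$-DS and comparing with two pruned subtrees. Your base cases and your $\ell=1$ case are correct as sketched.

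The case $\ell\ge 2$, however, contains a false step. You claim that $D\mapsto D\setminus u_\ell$ is a bijection from the $2$-DS of $T$ onto the $2$-DS of $T'=T\setminus u_\ell$, because ``the remaining forced leaf $u_1$ already guarantees $v_2$ is dominated-or-in-$D$ exactly as before.'' One remaining leaf supplies only one dominator, whereas $2$-domination requires two: if $\ell=2$ and $D$ is a $2$-DS of $T$ with $v_2\notin D$ and $v_3\notin D$ (so $v_2$ is dominated precisely by $u_1$ and $u_2$), then $D\setminus u_2$ is not a $2$-DS of $T'$. A concrete counterexample to $\partial_2(T)=\partial_2(T')$: let $v_2$ have leaf children $u_1,u_2$, let $v_2v_3v_4$ be a path, attach a leaf $y$ to $v_3$ and leaves $w_1,w_2$ to $v_4$; here $u_1v_2v_3v_4w_1$ is diametral, all leaves are forced, and a direct count gives $\partial_2(T)=7$ while $\partial_2(T\setminus u_2)=6$, so the chain $i(T)>i(T')\ge\partial_2(T')=\partial_2(T)$ breaks down (only $\partial_2(T')\le\partial_2(T)$ holds in general). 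The claim is in fact correct for $\ell\ge 3$, and the $\ell=2$ gap is repairable by your own $\ell=1$ device: the bad sets (those with $v_2,v_3\notin D$) inject via $D\mapsto(D\cup\{v_3\})\setminus\{u_\ell,v_2\}$ into the $2$-DS of $T\setminus\{u_\ell,v_2\}$, yielding $\partial_2(T)\le\partial_2(T\setminus u_\ell)+\partial_2(T\setminus\{u_\ell,v_2\})$, which matches your split of $i(T)$ and closes the induction; note that $T\setminus\{u_\ell,v_2\}$ is then a forest, so either state the induction for forests or use multiplicativity of $i$ and $\partial_2$ over components.
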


\begin{proof}
Clearly, for every tree with at most 3 vertices the inequality holds. Let $T$ be a $n$-vertex tree such that $i(T) \leq \partial_2(T)$ and for every tree $T'$ such that $|V(T')| < |V(T)|$ we have $i(T') > \partial_2(T')$. Consider a diametral path $X = x_1x_2x_3\ldots x_k$ in $T$. If $k \leq 3$ then $\partial_2(T) \leq 2$ and $i(G) \geq 5$, thus we assume that $k \geq 4$. Let $T_2$ (resp. $T_3$) be the maximal by inclusion subtree of $T$ such that $x_2,x_3 \in V(T_2)$ and $deg(x_2) = 1$ (resp. $x_3,x_4 \in V(T_3)$ and $deg(x_3) = 1$). Since all neighbors of $x_2$, except possibly $x_3$, belong to every 2-DS of $T$, we have $$\partial_2(T) = \partial_2(T,x^+_2) + \partial_2(T,x^-_2) \leq \partial_2(T_2) + \partial_2(T_3).$$ On the other hand, $$i(T) = i(T,x^-_1) + i(T,x^+_1) \geq i(T_2) + i(T_3).$$
Since $i(T_2) > \partial_2(T_2)$ and $i(T_3) > \partial_2(T_3)$, we have $i(T) > \partial_2(T)$.

% We now prove the second statement. For every graph with at most 4 vertices except $K_4$ (which is not a cacti) the inequality is true. If $G$ has at  Consider a cycle $C$ of a graph $G$ such that $C$ has at most one vertex $u$ which is adjacent to vertices from $V(G) \setminus V(C)$. All vertices of 
\end{proof}

\section{Acknowledgements}

This work was performed at the Saint Petersburg Leonhard Euler International Mathematical
Institute and supported by the Ministry of Science and Higher Education of the Russian
Federation (agreement no. 075–15–2022–287).

\end{document}